\newtheorem{theorem}{Theorem}[section]
\newtheorem{lemma}[theorem]{Lemma}
\newtheorem{corollary}[theorem]{Corollary}
\theoremstyle{definition}
\newtheorem{remark}[theorem]{Remark}
\numberwithin{equation}{section}
\begin{document}

\title[ On some determinant and matrix inequalities with a geometrical flavour ]{{\bf On some determinant and matrix inequalities with a geometrical flavour}}
\author{\small TING CHEN}

\address{Ting Chen:   School of Mathematics, University of Edinburgh, EH9 3JZ, UK.}
 \email{t.chen-16@sms.ed.ac.uk}

\footnotetext{\hspace{-3pt}
{2010 \em Mathematics Subject Classification.} Primary 26B25, 26D20, 42B99.\\
\indent{\em Key words and phrases.} matrix  inequalities,  determinant inequalities, symmetrisation, rearrangements, optimisers, sharp constants.}

\begin{abstract}
In this paper we study some determinant inequalities and matrix inequalities which have a geometrical flavour. 
We first examine some inequalities which place work of Macbeath \cite{Macbeath} in a more general setting and also relate to recent work of Gressman \cite{Gressman}. 
In particular, we establish optimisers for these determinant inequalities. 
We then use these inequalities to establish our main theorem which gives a geometric inequality of matrix type 
which improves and extends some inequalities of Christ in \cite{Christ}.

\end{abstract}

\maketitle

\section{Introduction}

\subsection{Notation and Preliminaries}

Let $\mathbb{R}^{n}$ be the $n$-dimensional Euclidean space, $n \geq 1$.
$|\cdot|$ denotes the Lebesgue measure on  $\mathbb{R}^{n}$ and
the absolute value on $\mathbb{R}$.
Denote  $\mathfrak{M}^{n \times n}(\mathbb{R})$ by a set of all $n \times n$ real matrices.
Let $B (0, r)$ be the ball centred at $0$ with radius $r$.
For $A\subset \mathbb{R}^{n} $ of finite Lebesgue measure, we define the symmetric rearrangement of 
$A$ as
\begin{center}
$A^{\ast}:=\{x: |x|<r \} \equiv B (0, r)$,  \  with $|A^{\ast}| =|A|$.
\end{center}
That is, $v_{n} r^{n}=|A|$, where $v_{n}$ is the volume of unit ball in $\mathbb{R}^{n}$.
We then define the symmetric decreasing rearrangement of a  nonnegative measurable function $f$ as
$$f^{\ast } (x):= \int_{0}^{\infty} \chi_{\{f>t\}^{\ast}} (x) dt, $$
where $ \chi_{\{f>t\}}$ is the characteristic function of the level set $ \{x: f(x)>t \}$,
and define the Steiner symmetrisation of  $f$ with respect to the $j$-th coordinate as
$$\mathcal{R}_{j}f(x_{1}, \dots,x_{n})=f^{\ast j}(x_{1}, \dots,x_{n}) :=  \int_{0}^{\infty} \chi_{\{f(x_{1}, \dots ,x_{j-1}, \cdot, x_{j+1}, \dots , x_{n})>t\}^{\ast}}(x_{j}) dt.$$

Let $u\in \mathbb{R}^{n}$ be a unit vector, $u^{\perp}$ be its orthogonal complement.
Then for any $x\in \mathbb{R}^{n}$, it can be uniquely written as $x=tu+y$ where $y\in u^{\perp}$.
We define the Steiner symmetrisation of $A$ with respect to the direction $u$ as
$$\mathcal{S}_{u}(A):= \{tu+y: A\cap(\mathbb{R}u+y)\neq \phi, |t| \leq \frac{|A\cap(\mathbb{R}u+y)|}{2}\}.$$
Obviously, $\mathcal{R}_{j}\chi_{A}$ is the Steiner symmetrisation of $A$ with respect to the direction $e_{j}$, $1\leq j \leq n$.
For simplicity, we denote  $\mathcal{S}_{e_{n}} \mathcal{S}_{e_{n-1}} \dots  \mathcal{S}_{e_{1}} (E)$  by $\mathcal{S}E$,
where
$\{e_{1}, \dots, e_{n}\}$ is the standard orthonormal basis in  $\mathbb{R}^{n}$.

\medskip

One easily sees that for any measurable set $E \subset \mathbb{R}^{n}$
$$\sup\limits_{x \in E^{\ast}}  \ |x| \leq \displaystyle{\sup_{x \in E }} \ |x|,   \eqno (1.1)$$
and from this it is not hard to see that  
$$\sup\limits_{x,y \in E^{\ast}}  \ |x-y| \leq \displaystyle{\sup_{x,y \in E }} \ |x-y|.   \eqno (1.2)$$
One way to obtain this  is as follows.
$$\sup\limits_{x,y \in E} \ |x-y|= \sup\limits_{z \in E-E} \ |z| \geq \sup\limits_{z \in (E-E)^{\ast}} \ |z|. \eqno (1.3)$$
For any $A,B \in \mathbb{R}^{n}$ of finite Lebesgue measure, it follows from the Brunn-Minkowski inequality that
$$A^{\ast}+B^{\ast} \subset (A+B)^{\ast}. \eqno (1.4)$$
Applying (1.4) in (1.3) implies
$$\sup\limits_{x,y \in E} \ |x-y|
\geq \sup\limits_{z\in (E-E)^{\ast}} \ |z|
\geq \sup\limits_{x\in E^{\ast}, y\in E^{\ast}} \ |x-y|,$$
which completes (1.2).

Let $E$ be a measurable set of finite volume in $\mathbb{R}^{n}$.
By the definition of the symmetric rearrangement,
\begin{center}
$E^{\ast}=B(0,r)$,  \ with  \ $v_{n} r^{n} = |E|$.
\end{center}
Clearly,
$$\sup\limits_{x\in E^{\ast}} \ |x|=r, \ \sup\limits_{x,y \in E^{\ast}} \ |x-y| =2r.$$
By (1.1) and (1.2) we have the following sharp inequality
$$|E| \leq  v_{n} \sup\limits_{x\in E} \ |x|^{n}, \eqno (1.5)$$
$$|E| \leq \frac{v_{n}}{2^{n}} \sup\limits_{x,y \in E} \ |x-y|^{n}. \eqno (1.6)$$
Moreover,  optimisers of both  (1.5) and (1.6) are balls in $\mathbb{R}^{n}$.
Inequality (1.6) is an isodiametric inequality, that is, amongst all sets with given diameter the ball has maximal volume.

\subsection{Macbeath's inequalities}

We now  go on to study the analogues of  (1.5) and (1.6) where we replace the distance norm by a volume or determinant,
so the question  becomes that of studying inequalities of the form
$$|E| \leq A_{n} \sup\limits_{\substack{y_{j} \in E \\ j=1, \dots, n}}  \det(0, y_{1}, \dots, y_{n}),   \eqno (1.7)$$
and
$$|E| \leq B_{n} \sup\limits_{\substack{y_{j} \in E \\ j=1, \dots, n+1}} \det(y_{1}, \dots, y_{n+1}),   \eqno (1.8)$$
which are supposed to hold for any measurable set $E$ in $\mathbb{R}^{n}$.
Here 
$$\det(y_{1}, \dots, y_{n+1}):=n!  \mathrm{vol}  (\mathrm{co}\{y_{1}, \dots, y_{n+1}\}).$$
So $\det(y_{1}, \dots, y_{n+1}) \geq 0$.
The precise value of $\det(y_{1}, \dots, y_{n+1})$ is the absolute value of the determinant
of the matrix $(y_{1}-y_{n+1}, \dots, y_{n}-y_{n+1})_{n\times n}$. 
In the special case when $n=1$, they become of the type (1.5) and (1.6) automatically.
Note that both (1.7) and (1.8) are $\mathrm{GL}_{n}(\mathbb{R})$  invariant, and (1.8) is translation invariant while (1.7) is not. 
Actually, it is enough to study  convex measurable sets in $\mathbb{R}^{n}$, since 
$$\sup\limits_{\substack{y_{j} \in E \\ j=1, \dots, n}}  \det(0, y_{1}, \dots, y_{n})
=\sup\limits_{\substack{y_{j} \in \mathrm{co}(E) \\ j=1, \dots, n}}  \det(0, y_{1}, \dots, y_{n}),$$
and
$$\sup\limits_{\substack{y_{j} \in E \\ j=1, \dots, n+1}} \det(y_{1}, \dots, y_{n+1})
=\sup\limits_{\substack{y_{j} \in \mathrm{co}(E) \\ j=1, \dots, n+1}} \det(y_{1}, \dots, y_{n+1}).$$

\medskip

We are interested in  the best constants  $A_n$, $B_n$ and  their optimsers.
It is not hard to deduce that  the best constant $A_n$ and  $B_n$ are related by
$$B_n \leq A_n \leq (n+1)B_n.   \eqno (1.9) $$
Indeed, the  translation invariance of (1.8) allows us to  assume that $0 \in E$.
Then  $B_n \leq A_n$ follows immediately. On the other hand, 
by the basic  determinant property we have
$$\det(y_{1}, \dots, y_{n+1})
\leq  \sum\limits_{j=1}^{n+1} \det(0, y_{1}, \dots,  y_{j-1}, y_{j+1}, \dots,  y_{n}),$$
which implies that
$$\sup\limits_{\substack{y_{j} \in E \\ j=1, \dots, n+1}} \det(y_{1}, \dots, y_{n+1})
\leq (n+1) \sup\limits_{\substack{y_{j} \in E \\ j=1, \dots, n}}  \det(0, y_{1}, \dots, y_{n}).$$
That completes  $A_n \leq (n+1)B_n$.
So in the special case when $n=1$, 
we have $A_1=2$,  $B_1= 1$ that  follows from  (1.5) and (1.6).

\medskip

Geometrically, the right side of (1.8) relates to  the maximal volume of $n$-simplex whose vertices are in $E$.
The relationship between the maximal volume of the $n$-simplex whose vertices are in $E$ and the measure of $E$ has been studied before (see \cite{Kanazawa}, \cite{Macbeath}).
It is well known that by compactness given a compact convex set $E\subset \mathbb{R}^{n}$, there exists a simplex $T \subset E$ of maximal volume. 
Let $F$ be a facet of $T$, $v$ the opposite vertex, and $H$ the hyperplane through $v$ parallel to $F$.
Then $H$ supports $E$, since otherwise one would obtain a contradiction to the maximality of the volume of $T$.
Since $F$ is an arbitrary facet of $T$, $T$ is contained in the simplex  $-n(T-c)+c$, where $c$ is the centroid of $T$. See \cite{Kanazawa} for details.
So $T \subset E \subset -n(T-c)+c$, and thus
$$ |E| \leq n^{n}|T|.   \eqno(1.10)$$
which implies that 
$$B_n \leq n^{n},  \  A_n \leq (n+1)n^{n}.$$

In 1950, Macbeath \cite{Macbeath} already gave the sharp version of (1.10) and (1.8) as follows.
Given a compact convex set $E \subset \mathbb{R}^{n}$, denote  $\mathfrak{B}_{m}$  the set of convex polytopes with  at most $m$ vertices in $E$,
and denote  $\mathfrak{C}_{m}$  the set of convex polytopes with at most $m$ vertices in $E^{\ast}$. Then
$$\sup\limits_{T^{\prime} \in \mathfrak{C}_{m}} |T^{\prime}| \leq \sup\limits_{T\in \mathfrak{B}_{m}} |T|. \eqno(1.11)$$
So when $m=n+1$, (1.11) gives 
$$\sup\limits_{\substack{y_{j} \in E^{\ast}  \\ j=1, \dots, n+1}} \det(y_{1}, \dots, y_{n+1})  
\leq  \sup\limits_{\substack{y_{j} \in E \\ j=1, \dots, n+1}} \det(y_{1}, \dots, y_{n+1}).$$
Moreover the problem is clearly affine invariant, thus the extremising sets turn out to be  balls and ellipsoids for (1.8).
Because the maximal simplex with vertices on a  ball  is the regular simplex with all sides equal, 
 we can obtain the corresponding best constant $B_n$. 
However, we do not believe that the sharp value of $A_n$ in (1.7)
has been given previously.

\subsection{Our Results}

In this paper we shall  give an alternative method to derive (1.7) and (1.8) with sharp constants $A_n$, $B_n$.
In Section 2, we will study some rearrangement inequalities which together with some work  in \cite{Chen} establish this.
A key ingredient will be Lemma 4.7 of \cite{Chen}, stating that 
for any $E_{j}\subset \mathbb{R}$ of finite Lebesgue measure, and $a_{j}\in \mathbb{R}$, $j=1, \dots, l$,
$$\sup\limits_{x_{j}\in E_{j}^{\ast}}  \ |\sum_{j=1}^{l} a_{j}x_{j}|  \leq \sup\limits_{x_{j}\in E_{j}} \ |\sum_{j=1}^{l} a_{j}x_{j}|. \eqno (1.12)$$
See Lemma 2.2 for the proof.

\medskip

More generally, returning to the inequalities (1.1), (1.2), we see there are functional versions.
One can consider a bilinear functional  rearrangement version of (1.2).
For all nonnegative measurable functions $f, g $ defined on $\mathbb{R}^{n}$,
$$\displaystyle{\sup_{x,y}} \  f^{\ast}(x) g^{\ast}(y) |x-y| \leq \displaystyle{\sup_{x,y}} \ f(x) g(y) |x-y|  \eqno (1.13)$$
holds.  Likewise, by the same argument as in its proof we also have
$$\displaystyle{\sup_{x}} \ f^{\ast}(x) |x| \leq \displaystyle{\sup_{x}} \ f(x) |x|. \eqno (1.14)$$
For the proof,  see Lemma 4.2 in \cite{Chen}.

\medskip

In Section 2,    generalizing them we arrive at the following multilinear functional  rearrangement inequalities,
$$\displaystyle{\sup_{y_{j}}} \prod_{j=1}^{n} f_{j}^{\ast}(y_{j})  \det(0, y_{1}, \dots, y_{n})
\leq   \displaystyle{\sup_{y_{j}}}  \prod_{j=1}^{n} f_{j}(y_{j})  \det(0, y_{1}, \dots, y_{n}),  \eqno (1.15)$$
and
$$\displaystyle{\sup_{y_{j}}} \prod_{j=1}^{n+1} f_{j}^{\ast}(y_{j})  \det(y_{1}, \dots, y_{n+1})
\leq   \displaystyle{\sup_{y_{j}}}  \prod_{j=1}^{n+1} f_{j}(y_{j})  \det(y_{1}, \dots, y_{n+1}),  \eqno (1.16)$$
which hold for any nonnegative measurable functions vanishing at infinity $f_{j} $ defined on $\mathbb{R}^{n}$,
in the sense that all its positive level sets have finite measure,
\begin{center}
$| \{x: |f(x)| >t \}| < \infty$, for all $t >0$.
\end{center}
As a matter of fact,  we  establish much more general inequalities in Theorem 2.5 below.
Then we get (1.7), (1.8) with the sharp constants by specialising to $f_{j}=\chi_{E}$ in (1.15)-(1.16), 
which also includes Macbeath's work (1.11) when $m=n+1$.

\medskip

There is another class of inequalities  concerning analogues  of (1.5), (1.6) where we  replace
the underlying  Euclidean space   $\mathbb{R}^{n}$  by the space of $n \times n$  real  matrices, and
the Euclidean norm by  $| \det(A) |$.
For example, Christ first  studied this type of inequality  in  \cite{Christ}.
Here ``$\det$"  becomes ordinary determinant of a matrix.

\medskip

\hspace{-13pt}{\bf Sublemma 14.1.}\cite{Christ} {\it
For any $n \geq 1$ there exists $C \in \mathbb{R}^{+}$ with the following property.
Let $E \subset \mathfrak{M}^{n \times n}(\mathbb{R})$ be a compact convex set satisfying $|E| < \infty$ and $E=-E$. Then
there exists $A\in E$  satisfying
$$ | \det(A) | \geq C |E|^{\frac{1}{n}},    \eqno (1.17)$$
where $|\cdot |$ denotes the Lebesgue measure on Euclidean space $\mathbb{R}^{n^{2}}$ and
the absolute value on $\mathbb{R}$.}

\medskip

\hspace{-13pt}{\bf Lemma 13.2.}\cite{Christ} {\it
For any $n \geq 1$ there exists $c, C \in \mathbb{R}^{+}$  and $k \in \mathbb{N}$ with the following property.
Let  $E$ be a measurable set in $\mathfrak{M}^{n \times n}(\mathbb{R})$  satisfying $|E| < \infty$. Then there exist
$T_{1}, \dots, T_{k} \in E$ and coefficients $s_{j} \in \mathbb{Z}$ satisfying
$|s_{j}| \leq c$, $\sum\limits_{j=1}^{k} s_{j}=0$, such that
$$| \det(\sum\limits_{j=1}^{k} s_{j}T_{j}) | \geq C |E|^{\frac{1}{n}}.     \eqno (1.18) $$}

\hspace{-13pt}{\bf Remarks 1.}

1. Let $\widetilde{E}=E-A := \{T-A: T \in E\}$ with $A \in \mathfrak{M}^{n \times n}(\mathbb{R})$,
then by Lemma 13.2 there exist $T_{1}, \dots,T_{k} \in E$
and  $s_{j} \in \mathbb{Z}$ satisfying $|s_{j}| \leq c$, $\sum\limits_{j=1}^{k} s_{j}=0$, such that
$$| \det(\sum\limits_{j=1}^{k} s_{j}(T_{j}-A)) |  = | \det(\sum\limits_{j=1}^{k} s_{j}T_{j}) | \geq C |E|^{\frac{1}{n}}=C |\widetilde{E}|^{\frac{1}{n}},  \eqno (1.19)$$
which shows (1.18) has  a translation invariance property that (1.17) lacks.

2. Based on the translation variance  property, we have an equivalent form of Lemma 13.2:
there exists $c, C \in \mathbb{R}^{+}$ such that for any  $E \subset \mathfrak{M}^{n \times n}(\mathbb{R})$
we can always select $T_{1}, \dots,T_{k} \in E$ and coefficients $s_{j} \in \mathbb{Z}$ satisfying $|s_{j}|  \leq c$, such that
$| \det(\sum\limits_{j=1}^{k} s_{j}T_{j}) | \geq C |E|^{\frac{1}{n}}$.

The equivalence is as follows. Supposing $A \in E$, denote $\widetilde{E}=E-A$.
Then if there exist $\overline{T}_{1}=T_{1}-A, \dots, \overline{T}_{k}=T_{k}-A \in \widetilde{E}$, where
$T_{j}\in E$, $1\leq j\leq k$,
and  there exist $s_{j} \in \mathbb{Z}$  satisfying $|s_{j}|   \leq c$, such that
$$| \det(\sum\limits_{j=1}^{k} s_{j}\overline{T}_{j}) | \geq C |\widetilde{E}|^{\frac{1}{n}}.$$
That is,
$$| \det( s_{1}T_{1}+ \dots + s_{k}T_{k} - (s_{1}+ \dots + s_{k}) A)| \geq C |\widetilde{E}|^{\frac{1}{n}}=C |E|^{\frac{1}{n}},$$
which satisfies the conditions of  Lemma 13.2.

More specifically,  when proving Lemma 13.2 Christ \cite{Christ} gave that under the same hypothesis of Lemma 13.2,
there exist  $A_{j} \in E$, and $s_{j} \in \{0, 1\}$, $j= 1, \dots, n$, such that
$$|\det(\sum\limits_{j=1}^{n} s_{j}A_{j}) | \geq  C |E|^{\frac{1}{n}} $$
which implies that for any measurable $E \subset \mathfrak{M}^{n \times n}$,
$$\sup\limits_{\substack{A_{1}, \dots, A_{n} \in E  \\  s_{1}, \dots, s_{n} \in \{0, 1\} }} |\det( s_{1}A_{1}+ \dots + s_{n}A_{n}) |
\gtrsim_{n} |E|^{\frac{1}{n}}. \eqno (1.20)$$

\medskip

In this paper we will improve (1.17)-(1.18) as follows, mainly  relying  on the rearrangement inequality (1.12).

\medskip

\hspace{-13pt}{\bf Main Theorem.}\ {\it
There exists a finite constant $\mathcal{C}_{n}$ such that  for any  measurable sets $E_j \subset \mathfrak{M}^{n \times n}(\mathbb{R})$ of finite measure, $j=1, \dots, n$,
$$ \prod\limits_{j=1}^{n}|E_j|^{\frac{1}{n^{2}}}  
\leq \mathcal{C}_{n} \sup\limits_{\substack{A_{j} \in E_{j}  \\  j=1, \dots, n}}  | \det(A_{1}+ \dots + A_{n} ) |.   \eqno (1.21)$$}

The main theorem implies (1.17) holds for all  compact convex sets  in  $\mathfrak{M}^{n \times n}(\mathbb{R})$ 
and extends  Lemma 13.2  as described below. In particular, we see from the main Theorem that all the $s_j$ in (1.20)
can be taken to be $1$.

\medskip

\hspace{-13pt}{\bf Corollary A.}\ {\it
There exists a finite constant $\mathcal{A}_{n}$ such that  for any  measurable set $E \subset \mathfrak{M}^{n \times n}(\mathbb{R})$ of finite measure,
for any non-zero scalar $\lambda_{j} \in \mathbb{R}$, $j=1, \dots, n$,
$$(\prod_{j=1}^{n} |\lambda_{j}|)  |E|^{\frac{1}{n}}  
\leq \mathcal{A}_{n} \displaystyle{\sup_{A_{j} \in E}}  \  | \det(\lambda_{1} A_{1}+ \dots + \lambda_{n} A_{n} )  |.  \eqno (1.22)$$}

\hspace{-13pt}{\bf Corollary B.}\ {\it
There exists a finite constant $\mathcal{B}_{n}$ such that  for any  measurable compact convex set  $E \subset \mathfrak{M}^{n \times n}(\mathbb{R})$ of finite measure,
$$|E|^{\frac{1}{n}}  \leq \mathcal{B}_{n} \displaystyle{\sup_{A \in E}}  \  | \det(A)  |.  \eqno (1.23)$$}
See Section 3 for the proof of Corollary B.

\medskip

\hspace{-13pt}{\bf Remarks 2.}

1. One can easily check that
$$ \sup\limits_{A \in \mathrm{co} \{0, E \}}  \  | \det(A)|=\sup\limits_{A \in E}  \  | \det(A)|.$$
This is because $ |\det( \lambda A)|= \lambda^{n}  |\det( A)|$  for any  $\lambda \in [0,1]$,
so we can always assume that $0 \in E$.
Given a measurable $E \subset \mathfrak{M}^{n \times n}(\mathbb{R})$, by scaling let $\widetilde{E}= r E$, $0 \neq r \in \mathbb{R}$, then
$$(| \widetilde{E}|)^{\frac{1}{n}}= (r^{n^{2}} |E|)^{\frac{1}{n}}=r^{n}|E|^{\frac{1}{n}},$$
and
$$\displaystyle{\sup_{A \in \widetilde{E}}}  \  | \det(A)|= r^{n} \displaystyle{\sup_{A \in E}}  \  | \det(A)|.$$
However,  (1.23) is not translation invariant.

2. We use a counterexample to show that (1.23) fails without the convex condition.
Take $n=2$  as an example, and let
\begin{center}
$E= \{(a,b,c,d): 0\leq ad \leq 1, 0 \leq bc \leq 1,  \  $and$ \ 1/N \leq a \leq N,  1/N \leq b \leq N\}$.
\end{center}
Then we have
$$\displaystyle{\sup_{A \in E}}  \  | \det(A)|
= \displaystyle{\sup_{A \in E}}  \   | \det
\left(
 \begin{array}{cc}
a & c \\
b & d \\
 \end{array}
 \right) |
\leq 2.$$
and $|E|= (2 \ln N)^{2}$.
Let $N \rightarrow \infty$, then we get the contradiction to (1.23).

\medskip

\hspace{-13pt}{\bf Remarks 3.}

1. An open  problem is what the best constants  $\mathcal{A}_{n}$,  $\mathcal{B}_{n}$,  $\mathcal{C}_{n}$ are.
We prove in this paper that balls or ellipsoids are not their optimisers. 

2. Note that  inequalities of matrix type introduced in this part do not enjoy an obvious affine invariance.
Nevertheless, there is an important action of $\mathrm{SL}_{n}(\mathbb{R})$ on  $\mathfrak{M}^{n \times n}(\mathbb{R})$
by premultiplication. That is, if $T\in \mathrm{GL}_{n}(\mathbb{R})$, $A\in \mathfrak{M}^{n \times n}(\mathbb{R})$ and
$E \subset  \mathfrak{M}^{n \times n}(\mathbb{R})$, then
$$\det(TA)=\det(T) \det(A)$$
and 
$$|TE|=|\det(T)|^{n} |E|.$$
So both matrix inequalities in this paper are invariant under premultiplication by a matrix of unimodular determinant.
We do not use the invariance of the entire problem under the action of left-multiplication by members of
$\mathrm{SL}_{n}(\mathbb{R})$ but instead the facts which underly this invariance, i.e. that this action preserves determinants of individual matrices 
and preserves volumes of sets. It enters as a ``catalyst"  in order to obtain a measure theoretic consequence and its presence vanishes without trace.

\bigskip

\section{Determinant inequalities}

In this section we study the determinant inequalities discussed in the introduction.
First we recall an estimate  by Gressman  \cite{Gressman} as follows.

\begin{lemma} \cite{Gressman}
There exists a finite constant  $C_{n}$ such that
for any $y \in \mathbb{R}^{n}$, for any measurable sets $E_{1}, \dots, E_{n}$ in $\mathbb{R}^{n}$ and for any $\delta >0$
$$| \{(y_{1}, \dots, y_{n})\in E_{1} \times \cdots \times E_{n}:
\det(y, y_{1}, \dots, y_{n}) < \delta \}| \leq C_{n}  \delta \displaystyle{\prod_{j=1}^{n}} |E_{j}|^{1-\frac{1}{n}}. \eqno (2.1)$$

\end{lemma}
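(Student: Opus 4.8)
The plan is to reduce the $n$-dimensional statement to a one-dimensional slicing argument by integrating out the determinant along one distinguished direction. Fix $y\in\mathbb{R}^n$; after a translation (which changes nothing since $\det(y,y_1,\dots,y_n)=\det(0,y_1-y,\dots,y_n-y)$ is translation-invariant in the $y_j$) we may assume $y=0$, so the quantity in question is $\det(0,y_1,\dots,y_n)=|\det(y_1,\dots,y_n)|$, the absolute value of the $n\times n$ matrix with columns $y_j$. First I would expand this determinant along the last column: writing $y_n=(y_n^{(1)},\dots,y_n^{(n)})$, we have $\det(y_1,\dots,y_n)=\sum_{i=1}^n (-1)^{n+i} y_n^{(i)} M_i$, where $M_i=M_i(y_1,\dots,y_{n-1})$ is the $(n-1)\times(n-1)$ minor obtained by deleting the $i$-th row and depends only on $y_1,\dots,y_{n-1}$. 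Hence for fixed $y_1,\dots,y_{n-1}$, the map $y_n\mapsto\det(y_1,\dots,y_n)$ is an affine (in fact linear) functional on $\mathbb{R}^n$ whose gradient is the vector $v=v(y_1,\dots,y_{n-1})$ with $|v|^2=\sum_i M_i^2$; note that this same vector has Euclidean length equal to the $(n-1)$-dimensional volume of the parallelepiped spanned by $y_1,\dots,y_{n-1}$, i.e. $|v|=\det(0,y_1,\dots,y_{n-1})$ in the paper's notation (an $(n-1)\times(n-1)$ object).

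Next I would estimate the measure of the bad set by Fubini, integrating first in $y_n\in E_n$: for fixed $y_1,\dots,y_{n-1}$,
$$\big|\{y_n\in E_n:\ |\langle v,y_n\rangle|<\delta\}\big|\ \le\ \big|\{y_n\in E_n^{\ast}:\ |\langle v,y_n\rangle|<\delta\}\big|,$$
where the inequality is precisely the slab-version of the rearrangement philosophy in (1.1)–(1.2): replacing $E_n$ by the centred ball $E_n^{\ast}$ only increases the measure of the part lying in a symmetric slab $\{|\langle v,\cdot\rangle|<\delta\}$ through the origin, because among all sets of given volume the ball maximises the measure caught in any fixed symmetric slab (this is the Riesz/bathtub principle, or simply Fubini in the direction $v/|v|$ together with (1.5) in dimension one). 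With $E_n^{\ast}=B(0,\rho)$, $v_n\rho^n=|E_n|$, a direct computation of the ball-slab intersection gives $\big|B(0,\rho)\cap\{|\langle v,\cdot\rangle|<\delta\}\big|\le c_n\,\rho^{n-1}\,\delta/|v|=c_n\,v_n^{-(n-1)/n}|E_n|^{1-1/n}\,\delta/|v|$.

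The remaining step is to integrate the resulting bound $c_n|E_n|^{1-1/n}\delta/|v(y_1,\dots,y_{n-1})|$ over $(y_1,\dots,y_{n-1})\in E_1\times\cdots\times E_{n-1}$, and this is where an induction (or, equivalently, a direct second application of the same slicing in each remaining variable) is needed: one must bound $\int_{E_1\times\cdots\times E_{n-1}} |v(y_1,\dots,y_{n-1})|^{-1}\,dy_1\cdots dy_{n-1}$ by $C_n\prod_{j=1}^{n-1}|E_j|^{1-1/n}$. The clean way is to recognise $|v|^{-1}$ as itself a determinant-type weight and to run the induction on $n$: peel off $y_{n-1}$ the same way (its coefficient being an $(n-2)$-minor), so that the inner integral in $y_{n-1}$ is a one-dimensional integral of $|\langle w,y_{n-1}\rangle|^{-1}$-type against a slab, which over the rearranged ball is again $O(|E_{n-1}|^{1-1/n})$ times the reciprocal of a lower-order determinant — and iterate. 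I expect the main obstacle to be making this iteration uniform: the reciprocal $|v|^{-1}$ is not integrable against the ball slab in a completely symmetric way, so one has to track carefully that at each stage the exponent $1-1/n$ (not $1-1/(n-1)$) is what comes out, which forces one to do the slicing always relative to the original dimension $n$ rather than recursing into genuinely lower-dimensional copies of the lemma; equivalently, one should prove a slightly stronger weighted statement by induction so that the exponents bookkeep correctly. Once that weighted induction hypothesis is set up, each step is a one-variable slab estimate of the kind already recorded in (1.1)–(1.6), and the constants multiply to give the finite $C_n$.
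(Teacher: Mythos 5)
The paper itself does not prove Lemma 2.1 --- it is quoted from Gressman \cite{Gressman} --- so the only issue is whether your argument stands on its own, and it does not: the key inner estimate is false. You claim that for fixed $y_1,\dots,y_{n-1}$, with $v=v(y_1,\dots,y_{n-1})$,
$$\big|\{y_n\in E_n:\ |\langle v,y_n\rangle|<\delta\}\big|\ \le\ \big|\{y_n\in E_n^{\ast}:\ |\langle v,y_n\rangle|<\delta\}\big|\ \le\ c_n\,|E_n|^{1-\frac1n}\,\delta/|v|,$$
on the grounds that among all sets of a given volume the ball maximises the measure caught in a symmetric slab. That principle is not true: the slab has infinite measure, so a set of any prescribed volume can lie entirely inside it, in which case the left-hand side equals $|E_n|$ while your bound is arbitrarily smaller. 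Concretely, with $n=2$, $v=e_1$, $E_2=[0,\eps]\times[0,1/\eps]$ and $\delta=\eps$, the left side is $|E_2|=1$ but the claimed bound is $c\,\eps$. (Hardy--Littlewood/bathtub gives nothing here, since the symmetric rearrangement of $\chi_{\{|\langle v,\cdot\rangle|<\delta\}}$ is identically $1$, recovering only the trivial bound $|E_n|$.) Nor is this a removable technicality about one exceptional slice: taking all $E_j$ equal to the same thin box $[0,\eps]\times[0,1/\eps]$ --- a configuration fully consistent with (2.1), both sides being comparable to $\delta$ --- one finds that for typical $y_1\in E_1$ the inner measure is of order $\delta$ while your slicewise bound is of order $\delta\eps$, so the pointwise-in-$(y_1,\dots,y_{n-1})$ estimate fails by an unbounded factor on essentially the whole range of integration. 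No bound of the form $F(|E_n|)\,\delta/|v|$ valid slice by slice can exist; the truth of (2.1) rests on the fact that $E_n$ cannot hug the slab determined by $v(y_1,\dots,y_{n-1})$ for most choices of the outer variables, i.e.\ on an interplay between the variables that a Fubini-plus-rearrangement argument in one variable at a time does not see.

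Even if the inner bound were available, the reduction you arrive at, namely $\int_{E_1\times\cdots\times E_{n-1}}|v(y_1,\dots,y_{n-1})|^{-1}\,dy_1\cdots dy_{n-1}\le C_n\prod_{j=1}^{n-1}|E_j|^{1-\frac1n}$, is a statement of essentially the same nature and depth as (2.1) itself (an integrated, borderline form of the sublevel estimate one level of multilinearity down, and with the exponent $1-\frac1n$ rather than the naive $1-\frac1{n-1}$), and the ``weighted induction'' you invoke to close it is only described, not formulated or proved --- you flag this yourself. So both halves of the proposal are open, and the first half is irreparable in its present form; a correct proof must either follow Gressman's actual argument or organise the estimate so that degenerate directions are paid for by the measure of the outer sets rather than by a pointwise slab bound.
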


As an immediate consequence of (2.1), we  obtain  the following inequality (2.2).
With  the same constant $C_n$, we have for any $y \in \mathbb{R}^{n}$,
for any  measurable sets   $E_{j} \subset \mathbb{R}^{n}$, $1 \leq j \leq n$,
$$ \displaystyle{\prod_{j=1}^{n}} \  |E_{j}|^{\frac{1}{n}} \leq C_{n}
\sup\limits_{y_{1} \in E_{1}, \dots, y_{n} \in E_{n} }  \det(y, y_{1}, \dots, y_{n}).  \eqno (2.2)$$
One way to see this  is as follows. Let $y \in \mathbb{R}^{n}$ and suppose
$$\sup\limits_{y_{1} \in E_{1}, \dots, y_{n} \in E_{n} } \det(y, y_{1}, \dots, y_{n})=s < \infty.$$
It follows from Lemma 2.1  that for all measurable sets $E_{j} \subset \mathbb{R}^{n}$, $1 \leq j \leq n$,
$$| \{(y_{1}, \dots, y_{n}) \in E_{1} \times \cdots \times E_{n}:  \det(y, y_{1}, \dots, y_{n}) \leq s  ) \} | 
\leq C_{n} s  \displaystyle{\prod_{j=1}^{n}}  |E_{j}|^{1-\frac{1}{n}}.$$
Note that $s = \sup\limits_{y_{1} \in E_{1}, \dots, y_{n} \in E_{n}}  \det(y, y_{1}, \dots, y_{n})$,  so
$$| \{(y_{1}, \dots, y_{n}) \in E_{1} \times \cdots \times E_{n}: \det(y, y_{1}, \dots, y_{n}) \leq s  ) \} |= \displaystyle{\prod_{j=1}^{n}} \ |E_{j}|.$$
Therefore,
$$\displaystyle{\prod_{j=1}^{n}} \ |E_{j}| \leq C_{n} s  \displaystyle{\prod_{j=1}^{n}}  |E_{j}|^{1-\frac{1}{n}}.$$
That is,
$$\displaystyle{\prod_{j=1}^{n}}  |E_{j}|^{\frac{1}{n}}  \leq C_{n} s=C_{n}  \sup\limits_{y_{1} \in E_{1}, \dots, y_{n} \in E_{n} } \det(y, y_{1}, \dots, y_{n}),$$
which completes (2.2).

\medskip

This motivates a multilinear perspective. Later on, we will prove the sharp version of (2.1)-(2.2).
More generally,  functional versions of (2.2) have been studied in  \cite{Chen}.
As  shown  in Theorem 3.1 of  \cite{Chen},
for any nonnegative measurable functions $ f_{j} \in  L^{p_{j}}(\mathbb{R}^{n}) $, 
$$  \displaystyle{\prod_{j=1}^{n+1}} \|f_{j}\|_{p_{j}}   \leq
C_{n, p_{j}} \displaystyle{\sup_{y_{j}}} \ \displaystyle{\prod_{j=1}^{n+1}} \ f_{j}(y_{j}) \det(y_{1}, \dots, y_{n+1})^{\gamma}   \eqno(2.3)$$
holds, if  and only if $p_{j}$ satisfy
$ \frac{1}{p_{j}} < \frac{\gamma}{n}$  \ for all $1 \leq j \leq n+1$ and $\gamma = \displaystyle{ \sum_{j=1}^{n+1} } \ \frac{1}{p_{j}}$.
And Lemma 3.2 in \cite{Chen} gives an endpoint case of the multilinear inequality (2.3). That is,
for any nonnegative measurable functions $ f_{j} \in  L^{p_{j}}(\mathbb{R}^{n}) $
$$ \displaystyle{\prod_{j=1}^{n}} \|f_{j}\|_{L^{n, \infty} (\mathbb{R}^{n})} \|f_{n+1}\|_{L^{\infty}}
\leq C_{n} \displaystyle{\sup_{y_{j}}} \  \displaystyle{\prod_{j=1}^{n+1}} \ f_{j}(y_{j}) \  \det(y_{1}, \dots, y_{n+1}).   \eqno (2.4)$$
It is not hard to see (2.4) implies for any $y \in \mathbb{R}^{n}$
$$ \displaystyle{\prod_{j=1}^{n}} \|f_{j}\|_{L^{n, \infty} (\mathbb{R}^{n})}
\leq C_{n} \displaystyle{\sup_{y_{j}}} \  \displaystyle{\prod_{j=1}^{n}} \ f_{j}(y_{j}) \  \det(y, y_{1}, \dots, y_{n}),   \eqno (2.5)$$
which also concludes  (2.2) by specialising to $f_{j}=\chi_{E_{j}}$.
For the proof of (2.3)- (2.5) and more general multilinear cases, we refer to \cite{Chen}.

\bigskip

Before studying the sharp versions of inequalities (2.2),
we recall some useful tools in \cite{Chen}  which were already stated in the introduction.

\begin{lemma} \cite{Chen}  
Let $E_{j}$ be  measurable sets in $\mathbb{R}$ and $a_{j}\in \mathbb{R}$, $j=1, \dots, l$. Then
$$\sup\limits_{x_{j}\in E_{j}^{\ast}}  |\sum_{j=1}^{l} a_{j}x_{j}|  \leq \sup\limits_{x_{j}\in E_{j}} |\sum_{j=1}^{l} a_{j}x_{j}|.  \eqno (2.6)$$

\end{lemma}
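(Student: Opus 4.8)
The plan is to collapse both suprema in (2.6) to a statement about a single Minkowski sum in $\mathbb{R}$ and then feed it into the one--dimensional Brunn--Minkowski facts (1.1) and (1.4). First I would dispose of degeneracies: a term with $a_j=0$ contributes nothing to either side, and if some $E_j$ with $a_j\neq 0$ has infinite measure then, fixing the remaining variables and letting $x_j$ run through the unbounded set $E_j$, the right-hand side is already $+\infty$; so I may assume every $a_j\neq 0$ and every $E_j$ is a nonempty set of finite positive measure. Writing $F_j:=a_jE_j\subset\mathbb{R}$ we have $|F_j|=|a_j|\,|E_j|<\infty$ and, since a nonzero dilation commutes with symmetric rearrangement on the line, $F_j^{\ast}=(a_jE_j)^{\ast}=a_jE_j^{\ast}$.

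The key observation is that as $(x_1,\dots,x_l)$ ranges over $E_1\times\cdots\times E_l$ the number $\sum_{j=1}^l a_jx_j$ ranges exactly over the Minkowski sum $F_1+\cdots+F_l$, so
$$\sup_{x_j\in E_j}\,\Big|\sum_{j=1}^{l}a_jx_j\Big|=\sup_{z\in F_1+\cdots+F_l}|z|,$$
and likewise with every $E_j$ replaced by $E_j^{\ast}$ and every $F_j$ by $F_j^{\ast}$. Now I would apply (1.4) repeatedly to get $F_1^{\ast}+\cdots+F_l^{\ast}\subset(F_1+\cdots+F_l)^{\ast}$, and then (1.1) applied to the bounded set $F_1+\cdots+F_l$ to get
$$\sup_{z\in F_1^{\ast}+\cdots+F_l^{\ast}}|z|\ \le\ \sup_{z\in(F_1+\cdots+F_l)^{\ast}}|z|\ \le\ \sup_{z\in F_1+\cdots+F_l}|z|.$$
Translating the two ends back through the identifications of the previous paragraph is precisely (2.6).

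Equivalently, one can run the estimate by hand: since $E_j^{\ast}$ is the symmetric interval of length $|E_j|$, the left side of (2.6) equals $\tfrac12\sum_j|a_j|\,|E_j|=\tfrac12\sum_j|F_j|$; iterated one--dimensional Brunn--Minkowski gives $|F_1+\cdots+F_l|\ge\sum_j|F_j|$; and since $F_1+\cdots+F_l$ is bounded it lies in the symmetric interval of half-length $M:=\sup_{x_j\in E_j}|\sum_j a_jx_j|$, so $2M\ge|F_1+\cdots+F_l|\ge\sum_j|F_j|$, i.e. $M\ge\tfrac12\sum_j|F_j|$. Either route gives the claim.

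The argument is genuinely short, so there is no real obstacle; the only care needed is bookkeeping. In particular, a Minkowski sum of merely measurable sets need not be measurable, so strictly speaking one should either reduce to compact $E_j$ by inner approximation (which changes none of the suprema) or interpret the symmetric rearrangement of the sum via its outer measure; and one should keep track of which inclusions in (1.4) require nonemptiness and finiteness of measure, which is exactly why the degenerate cases were removed at the outset.
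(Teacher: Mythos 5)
Your proof is correct and follows essentially the same route as the paper: reduce to the Minkowski sum $a_1E_1+\cdots+a_lE_l$, use the one-dimensional Brunn--Minkowski inequality to get $a_1E_1^{\ast}+\cdots+a_lE_l^{\ast}\subset(a_1E_1+\cdots+a_lE_l)^{\ast}$, and conclude with (1.1); your ``by hand'' variant is just this computation made explicit, since the rearranged sum is the symmetric interval of half-length $\tfrac12\sum_j|a_j||E_j|$. The extra care you take with degenerate cases and with measurability of Minkowski sums is reasonable bookkeeping that the paper passes over silently, but it does not change the argument.
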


\begin{proof}
From the Brunn-Minkowski inequality
$$|E+F| \geq |E| + |F|$$
where $E, F \subset \mathbb{R}$,  it follows that 
$$|E_1+ \dots + E_l| \geq |E_1| + \dots + |E_l|.$$
Because $E_{j}^{\ast}= (-|E_j|/2,  |E_j|/2)$, $1 \leq j \leq l$,
then 
$$E_{1}^{\ast}+ \dots + E_{l}^{\ast} = (-\sum\limits_{j=1}^{l} \frac{|E_j|}{2},  \sum\limits_{j=1}^{l} \frac{|A_j|}{2}).$$
Thus we have
$$|(E_{1}+ \dots + E_{l})^{\ast}| = |E_{1}+ \dots + E_{l}| \geq  |E_1| + \dots + |E_l| =| E_{1}^{\ast}+ \dots + E_{l}^{\ast}|,$$
which implies
$$(E_{1}+ \dots + E_{l})^{\ast} \supset E_{1}^{\ast}+ \dots + E_{l}^{\ast}.  \eqno (2.7)$$
Clearly, for any non-zero $a \in \mathbb{R}$ and  any measurable subset $E$ in $ \mathbb{R}$
$$(aE)^{\ast}=a E^{*}.   \eqno (2.8)$$
Combining with (2.7)-(2.8) we have
$$(a_1 E_{1}+ \dots + a_l E_{l})^{\ast} \supset a_1 E_{1}^{\ast}+ \dots + a_l E_{l}^{\ast}.  \eqno (2.9)$$
Apply  (1.1) and (2.9), 
$$\sup\limits_{x_{j}\in E_{j}}  |\sum_{j=1}^{l} a_{j}x_{j}|  
= \sup\limits_{\bar{x} \in \sum\limits_{j=1}^{l} a_j E_{j}}  |\bar{x}|
\geq \sup\limits_{\bar{x} \in (\sum\limits_{j=1}^{l} a_j E_{j})^{\ast}} |\bar{x}|
\geq \sup\limits_{\bar{x} \in \sum\limits_{j=1}^{l}  a_j E_{j}^{\ast}}  |\bar{x}|.$$
Besides,
$$\sup\limits_{\bar{x} \in \sum\limits_{j=1}^{l}  a_j E_{j}^{\ast}}  |\bar{x}|
= \sup\limits_{x_j \in a_{j}E_{j}^{*} } | \sum_{j=1}^{l}  x_{j}|  
=\sup\limits_{x_j \in E_{j}^{*} }  | \sum_{j=1}^{l}  a_{j}x_{j}|.$$ 
Therefore,
$$\sup\limits_{x_{j}\in E_{j}}  |\sum_{j=1}^{l} a_{j}x_{j}|   \geq \sup\limits_{x \in E_{j}^{*} }  | \sum_{j=1}^{l}  a_{j}x_{j}|.$$

\end{proof}

\medskip

It follows from Lemma 2.2 we have  inequalities (2.10)-(2.12).
Let $E_1, \dots, E_l$ be measurable sets  in $\mathbb{R}^{n}$.
 Let $l \geq n$ and  let  $ A=\{a_{ik}\}$  be an $l \times n$ real matrix. 
Then for each $1 \leq t \leq n$, 
$$\sup\limits_{\substack{y_{j} \in \mathcal{S}_{e_{t}}(E_j) \\  j=1, \dots, l}}  \det(0, \sum\limits_{i=1}^{l} a_{i1}y_{i}, \dots, \sum\limits_{i=1}^{l} a_{in}y_{i})
\leq 
\sup\limits_{\substack{y_{j} \in E_j \\  j=1, \dots, l}}  \det(0, \sum\limits_{i=1}^{l} a_{i1}y_{i}, \dots, \sum\limits_{i=1}^{l} a_{in}y_{i}), \eqno (2.10)$$
where $\{e_{1}, \dots, e_{n}\}$  is   the standard basis for  $\mathbb{R}^{n}$.

Let $l=n$ and
\begin{equation*}
a_{ik}=
\begin{cases}
1 &\mbox{if $i=k$} \\
0 &\mbox{otherwise,}
\end{cases}
\end{equation*}
so (2.10) gives
$$\sup\limits_{\substack{y_{j} \in \mathcal{S}_{e_{t}}(E_j) \\  j=1, \dots, n}} \det(0, y_{1}, \dots, y_{n})
\leq  \sup\limits_{\substack{y_{j} \in E_j \\  j=1, \dots, n}} \det(0, y_{1}, \dots, y_{n}).  \eqno (2.11)$$
If we set $l=n+1$ and
\begin{equation*}
a_{ik}=
\begin{cases}
1 &\mbox{if $i=k$} \\
-1 &\mbox{if $i=n+1$} \\
0 &\mbox{otherwise,}
\end{cases}
\end{equation*}
thus 
$$\sup\limits_{\substack{y_{j} \in \mathcal{S}_{e_{t}}(E_j) \\  j=1, \dots, n+1}} \det(y_{1}, \dots, y_{n+1})
\leq  \sup\limits_{\substack{y_{j} \in E_j \\  j=1, \dots, n+1}} \det(y_{1}, \dots, y_{n+1}).     \eqno (2.12)$$

\begin{proof}
For simplicity, we just see (2.10) holds for $e_{1}$. 
Define the projection $\pi$: $\mathbb{R}^{n} \to \mathbb{R}^{n-1}$ by 
$$\pi(x)=(x_2, \dots, x_n), \ \forall \ x=(x_1, \dots, x_n) \in \mathbb{R}^{n}.$$
For any $x\in  \mathbb{R}^{n}$, write $x= (x_1, x^{\prime})$ where $x^{\prime} \in \mathbb{R}^{n-1}$.
For $y_j \in E_j$,
$$\det(0, y_{1}, \dots, y_{n})=
| \det \left(\begin{array}{cccc}
   y_{11} &  y_{21}   &  \dots  &  y_{n1} \\
   \vdots &  \vdots    &   \   &   \vdots        \\
   y_{1n} &  y_{2n} &  \dots  &  y_{nn} 
 \end{array}
\right) |
= |y_{11} A_1 +y_{21} A_2 + \dots y_{n1} A_{n}|,$$
where $A_{j}$ depend only on $\{y_{1}^{\prime}, \dots, y_{n}^{\prime}\}$.
Hence,
$\det(0, \sum\limits_{i=1}^{l} a_{i1}y_{i}, \dots, \sum\limits_{i=1}^{l} a_{in}y_{i})$ is the linear combination of $y_{11}, \dots, y_{l1}$.
That is,
$$\det(0, \sum\limits_{i=1}^{l} a_{i1}y_{i}, \dots, \sum\limits_{i=1}^{l} a_{in}y_{i})=
 |y_{11} B_1 +y_{21} B_2 + \dots y_{l1} B_{l}|,$$
where $B_{j}$ depend only on $\{y_{1}^{\prime}, \dots, y_{l}^{\prime}\}$.
For each $j$, fix $y_{j}^{\prime}:=(y_{j2}, \dots, y_{jn}) \in \pi(E_j)$, $1 \leq j \leq l$.
Let 
$$E_{j}(y_{j}^{\prime}) =\{y_{j1}\in \mathbb{R}: (y_{j1}, y_{j}^{\prime}) \in E_{j}\}. $$
It follows from Lemma 2.2 that
$$\sup\limits_{y_{j1} \in E_{j}(y_{j}^{\prime})^{\ast}} |\sum_{j=1}^{l} B_{j}y_{j1}|  \leq  \sup\limits_{y_{j1} \in E_{j}(y_{j}^{\prime})} |\sum_{j=1}^{l} B_{j}y_{j1}|. \eqno (2.13)$$
Since
$$\mathcal{S}_{e_{1}}(E_j)= \bigcup\limits_{y_{j}^{\prime} \in \pi (E_j)} \{(y_{j1}, y_{j}^{\prime}):  y_{j1} \in E_{j}(y_{j}^{\prime})^{\ast}\},  $$
together with (2.13)  gives
$$\sup\limits_{\substack{y_{j} \in \mathcal{S}_{e_{1}}(E_j) \\  j=1, \dots, l}}  \det(0, \sum\limits_{i=1}^{l} a_{i1}y_{i}, \dots, \sum\limits_{i=1}^{l} a_{in}y_{i})
\leq 
\sup\limits_{\substack{y_{j} \in E_j \\  j=1, \dots, l}}  \det(0, \sum\limits_{i=1}^{l} a_{i1}y_{i}, \dots, \sum\limits_{i=1}^{l} a_{in}y_{i}).$$

\end{proof}

\medskip

More generally, togehter with the rotation invariance we have the following rearrangement theorem.

\begin{theorem}
Let  $ A=\{a_{ik}\}$  be an $l \times n$ real matrix with $l \geq n$.
Let $u$ be a unit vector in $\mathbb{R}^{n}$. Then
for any measurable sets $E_j \subset \mathbb{R}^{n}$, $1 \leq j \leq l$,
$$\sup\limits_{\substack{y_{j} \in \mathcal{S}_{u}(E_j) \\  j=1, \dots, l}}  \det(0, \sum\limits_{i=1}^{l} a_{i1}y_{i}, \dots, \sum\limits_{i=1}^{l} a_{in}y_{i})
\leq 
\sup\limits_{\substack{y_{j} \in E_j \\  j=1, \dots, l}}  \det(0, \sum\limits_{i=1}^{l} a_{i1}y_{i}, \dots, \sum\limits_{i=1}^{l} a_{in}y_{i}).$$

\end{theorem}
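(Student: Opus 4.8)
The plan is to reduce the statement for an arbitrary unit direction $u$ to the coordinate-direction case already established in inequalities (2.10)--(2.12), using only the rotation invariance of the functional on the right-hand side. First I would pick a rotation $U \in \mathrm{SO}(n)$ with $Ue_1 = u$ (or $Ue_t = u$ for any fixed $t$), and observe two things: that Steiner symmetrisation intertwines with rotations, i.e. $\mathcal{S}_u(E_j) = U\,\mathcal{S}_{e_1}(U^{-1}E_j)$, and that the quantity $\det(0, \sum_i a_{i1}y_i, \dots, \sum_i a_{in}y_i)$ is unchanged if every $y_i$ is replaced by $Uy_i$, since $\det$ here denotes $n!$ times the volume of a convex hull and rigid motions preserve volume (equivalently, $|\det(Uv_1,\dots,Uv_n)| = |\det U|\cdot|\det(v_1,\dots,v_n)| = |\det(v_1,\dots,v_n)|$). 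Both facts are elementary and I would state them as one or two lines rather than proving them in detail.

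Granting these, the argument is a short change of variables. Set $F_j := U^{-1}E_j$. Running the substitution $y_j = U z_j$ in the supremum on the left and using $\mathcal{S}_u(E_j) = U\mathcal{S}_{e_1}(F_j)$ turns
$$\sup_{y_j \in \mathcal{S}_u(E_j)} \det\Bigl(0, \textstyle\sum_i a_{i1}y_i, \dots, \sum_i a_{in}y_i\Bigr)$$
into
$$\sup_{z_j \in \mathcal{S}_{e_1}(F_j)} \det\Bigl(0, \textstyle\sum_i a_{i1}z_i, \dots, \sum_i a_{in}z_i\Bigr),$$
because applying $U$ to each of the vectors $\sum_i a_{ik}z_i$ does not change the determinant. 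By (2.10) (the case $t=1$) the latter is at most
$$\sup_{z_j \in F_j} \det\Bigl(0, \textstyle\sum_i a_{i1}z_i, \dots, \sum_i a_{in}z_i\Bigr),$$
and reversing the substitution $z_j = U^{-1}y_j$ (again using rotation invariance of the determinant) identifies this with $\sup_{y_j \in E_j} \det(0, \sum_i a_{i1}y_i, \dots, \sum_i a_{in}y_i)$, which is exactly the right-hand side. This closes the chain of inequalities.

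The only point that needs genuine care — and the step I expect to be the main obstacle — is the interaction between the linear combinations $\sum_i a_{ik}y_i$ and the rotation: one must be sure that rotating the argument vectors $y_i \mapsto Uy_i$ really does translate into rotating the output vectors $\sum_i a_{ik}y_i \mapsto U\sum_i a_{ik}y_i$, which holds because $U$ is linear and the $a_{ik}$ are scalars, and then that $\det(0, Uw_1, \dots, Uw_n) = \det(0, w_1, \dots, w_n)$ for $U \in \mathrm{SO}(n)$. Once these commuting properties are in hand everything is formal. I would also remark that the choice $Ue_t = u$ with any $t$ works equally well, since (2.10) is available for every coordinate direction $e_t$, so the symmetrisation direction in the intermediate step can be taken to be whichever coordinate axis is most convenient. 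No compactness or measure-theoretic subtlety beyond what is already used in Lemma 2.2 is required.
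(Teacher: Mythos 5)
Your proposal is correct and follows essentially the same route as the paper: the paper also writes $u=\rho e_{t}$, establishes the intertwining identity $\mathcal{S}_{\rho e_{t}}(E)=\rho\circ\mathcal{S}_{e_{t}}(\rho^{-1}(E))$ directly from the definition, and then sandwiches an application of (2.10) between two uses of the rotation invariance of the determinant functional, exactly as you do with $U$ and the change of variables $y_{j}=Uz_{j}$. The only cosmetic difference is that you state the intertwining identity as elementary while the paper verifies it line by line.
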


\begin{proof}
Suppose $u=\rho e_{t}$, where $\rho$ is a rotation around the origin in $\mathbb{R}^{n}$. \\
By definition,
\begin{align*}
\mathcal{S}_{\rho e_{t}}(E)
&=\{m \rho e_{t}+y: E\cap [\mathbb{R}(\rho e_{t})+y ] \neq \phi, |m| \leq \frac{|E\cap [\mathbb{R}(\rho e_{t})+y] |}{2}\} \\
&= \{\rho(me_{t}+\rho^{-1}y): \rho^{-1}(E) \cap (\mathbb{R}e_{t}+\rho^{-1}y)\neq \phi, |m| \leq \frac{ |\rho[\rho^{-1}(E)\cap(\mathbb{R}e_{t}+\rho^{-1}y)]|}{2}\} \\
&= \{\rho(me_{t}+\rho^{-1}y): \rho^{-1}(E) \cap (\mathbb{R}e_{t}+\rho^{-1}y)\neq \phi, |m| \leq \frac{ |\rho^{-1}(E)\cap(\mathbb{R}e_{t}+\rho^{-1}y)|}{2}\}.
\end{align*}
Note that
$$\mathcal{S}_{e_{t}}(\rho^{-1}(E))= \{ m e_{t}+\rho^{-1}y: \rho^{-1}(E) \cap (\mathbb{R}e_{t}+\rho^{-1}y)\neq \phi,
|m| \leq \frac{| \rho^{-1}(E)  \cap (\mathbb{R}e_{t}+\rho^{-1}y)|}{2} \}.$$
Hence  we obtain
$$\mathcal{S}_{\rho e_{t}}(E)=\rho \circ \mathcal{S}_{e_{t}}(\rho^{-1}(E)).   \eqno (2.14)$$
By the invariance  under rotation $\rho$ 
\begin{align*}
\sup\limits_{\substack{y_{j} \in \mathcal{S}_{u}(E_j) \\  j=1, \dots, l}} 
\det(0, \sum\limits_{i=1}^{l} a_{i1}y_{i}, \dots, \sum\limits_{i=1}^{l} a_{in}y_{i})
&= \sup\limits_{\substack{y_{j} \in\rho \circ \mathcal{S}_{e_{t}}(\rho^{-1}(E_j)) \\  j=1, \dots, l}} 
\det(0, \sum\limits_{i=1}^{l} a_{i1}y_{i}, \dots, \sum\limits_{i=1}^{l} a_{in}y_{i}) \\
&=\sup\limits_{\substack{y_{j} \in \mathcal{S}_{e_{t}}(\rho^{-1}(E_j)) \\  j=1, \dots, l}} \det(0, \sum\limits_{i=1}^{l} a_{i1}y_{i}, \dots, \sum\limits_{i=1}^{l} a_{in}y_{i}).
\end{align*}
Applying (2.10) gives
\begin{align*}
\sup\limits_{\substack{y_{j} \in \mathcal{S}_{e_{t}}(\rho^{-1}(E_j)) \\  j=1, \dots, l}} \det(0, \sum\limits_{i=1}^{l} a_{i1}y_{i}, \dots, \sum\limits_{i=1}^{l} a_{in}y_{i}) 
&\leq \sup\limits_{\substack{y_{j} \in \rho^{-1}(E_j) \\  j=1, \dots, l}}  \det(0, \sum\limits_{i=1}^{l} a_{i1}y_{i}, \dots, \sum\limits_{i=1}^{l} a_{in}y_{i}) \\
&=  \sup\limits_{\substack{y_{j} \in E_j \\  j=1, \dots, l}} \det(0, \sum\limits_{i=1}^{l} a_{i1}y_{i}, \dots, \sum\limits_{i=1}^{l} a_{in}y_{i}).
\end{align*}
Therefore, we conclude
$$\sup\limits_{\substack{y_{j} \in \mathcal{S}_{u}(E_j) \\  j=1, \dots, l}} 
\det(0, \sum\limits_{i=1}^{l} a_{i1}y_{i}, \dots, \sum\limits_{i=1}^{l} a_{in}y_{i})
=  \sup\limits_{\substack{y_{j} \in E_j \\  j=1, \dots, l}} \det(0, \sum\limits_{i=1}^{l} a_{i1}y_{i}, \dots, \sum\limits_{i=1}^{l} a_{in}y_{i}).$$

\end{proof}

\medskip

Now we can decide the sharp versions of the determinant inqualities in this section.
It is known that, given a compact convex set $K \subset \mathbb{R}^{n}$, there exists a sequence of
iterated Steiner symmetrisations of $K$ that converges in the Hausdorff metric to a ball of the same volume. 
For example, given a basis of unit directions $u_1, \dots, u_n$ for $\mathbb{R}^{n}$ having mutually irrational multiple of $\pi$ radian differences, then 
the sequence $\mathcal{S}_{u_{n}} \dots \mathcal{S}_{u_{2}} \mathcal{S}_{u_{1}}(K)$
iterated infinitely many times to $K$ will converge to a ball of the same volume as $K$.
For the convergence of  Steiner symmetrisation, refer to \cite{Bianchi}, \cite{Bonnesen},
\cite{Eggleston}, \cite{Klain}, \cite{Webster},  etc.

\medskip

One can easily verify that the  suprema function  on the right side of  inequalities (2.10) are continuous 
under the Hausdorff metric, and they do not change if we replace  each $E_j$ by $\overline{\mathrm{co}}(E_j)$.
Therefore, applying the convergence of  Steiner symmetrisation together with  Theorem 2.3 
we have shown the following lemma.

\begin{lemma}

Let $l \geq n$ and  let  $ A=\{a_{ik}\}$  be an $l \times n$ real matrix. 
Then for any measurable sets  $E_{j} \subset \mathbb{R}^{n}$, $1 \leq j \leq l$, 
$$\sup\limits_{y_{1} \in E_{1}^{*}, \dots, y_{l} \in E_{l}^{*} }  \det(0, \sum\limits_{i=1}^{l} a_{i1}y_{i}, \dots, \sum\limits_{i=1}^{l} a_{in}y_{i})
\leq 
\sup\limits_{y_{1} \in E_{1}, \dots, y_{l} \in E_{l} }  \det(0, \sum\limits_{i=1}^{l} a_{i1}y_{i}, \dots, \sum\limits_{i=1}^{l} a_{in}y_{i}).   $$

\end{lemma}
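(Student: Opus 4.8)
The plan is to combine Theorem 2.3 with the classical convergence of iterated Steiner symmetrisations and a continuity/reduction-to-convex argument, exactly as the paragraph preceding the statement suggests. First I would reduce to the case where each $E_j$ is compact and convex. Since the supremum over $y_j \in E_j$ of the quantity $\det(0, \sum_i a_{i1}y_i, \dots, \sum_i a_{in}y_i)$ is unchanged when each $E_j$ is replaced by its closed convex hull $\overline{\mathrm{co}}(E_j)$ — the expression is, for fixed choices of the other variables, a norm of an affine-linear function of $y_j$, hence its sup over a set equals its sup over the convex hull — and since $(\overline{\mathrm{co}}(E_j))^{\ast} = E_j^{\ast}$ (symmetric rearrangement depends only on measure), it suffices to prove the inequality for compact convex $E_j$ of finite measure. (One should also dispatch the trivial case where some $|E_j| = \infty$ or $= 0$, or where the right-hand side is infinite, separately; these are immediate.)

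Next I would set up the iterated symmetrisation. Fix a basis of unit directions $u_1, \dots, u_n$ in $\mathbb{R}^n$ whose pairwise rotation angles are irrational multiples of $\pi$, and for $m \geq 1$ let $E_j^{(m)} := (\mathcal{S}_{u_n} \cdots \mathcal{S}_{u_1})^m (E_j)$ be the $m$-fold iterate applied to $E_j$. Each $E_j^{(m)}$ is compact and convex with $|E_j^{(m)}| = |E_j|$, and by the cited convergence results (\cite{Bianchi}, \cite{Klain}, etc.) $E_j^{(m)} \to E_j^{\ast}$ in the Hausdorff metric as $m \to \infty$. Applying Theorem 2.3 repeatedly — once for each of the $n$ directions $u_1, \dots, u_n$ and each of the $m$ passes — yields
$$\sup_{\substack{y_j \in E_j^{(m)} \\ j=1,\dots,l}} \det\Big(0, \sum_{i=1}^l a_{i1}y_i, \dots, \sum_{i=1}^l a_{in}y_i\Big) \leq \sup_{\substack{y_j \in E_j \\ j=1,\dots,l}} \det\Big(0, \sum_{i=1}^l a_{i1}y_i, \dots, \sum_{i=1}^l a_{in}y_i\Big)$$
for every $m$, since each individual symmetrisation step does not increase the left-hand supremum.

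Finally I would pass to the limit $m \to \infty$ on the left using continuity of the supremum functional under Hausdorff convergence. Concretely, the map
$$(K_1, \dots, K_l) \mapsto \sup_{\substack{y_j \in K_j \\ j=1,\dots,l}} \det\Big(0, \sum_{i=1}^l a_{i1}y_i, \dots, \sum_{i=1}^l a_{in}y_i\Big)$$
is continuous on the space of $l$-tuples of nonempty compact convex sets equipped with the Hausdorff metric: the integrand is a continuous function of $(y_1, \dots, y_l)$, and a supremum of a fixed continuous function over sets varying continuously in the Hausdorff metric varies continuously (upper semicontinuity from the fact that nearby sets are contained in small neighbourhoods, lower semicontinuity from being able to approximate any near-optimal tuple). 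Hence the left-hand side converges to $\sup_{y_j \in E_j^{\ast}} \det(0, \sum_i a_{i1}y_i, \dots, \sum_i a_{in}y_i)$, and combined with the uniform bound above this gives exactly the claimed inequality. The main obstacle — really the only non-bookkeeping point — is justifying the continuity statement cleanly (including that $E_j^{(m)}$ stays in a fixed bounded region so one works in a compact setting, which follows from $\mathcal{S}_u$ not increasing the circumradius, cf.\ (1.1)); everything else is a direct assembly of Theorem 2.3 and the standard convergence of Steiner symmetrisation.
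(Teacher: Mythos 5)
Your proposal follows essentially the same route as the paper: reduce to compact convex sets via the convex hull, apply Theorem 2.3 once per direction and pass, invoke the Hausdorff convergence of iterated Steiner symmetrisations to a ball, and conclude by continuity of the supremum functional under the Hausdorff metric. One justification in your reduction step is however wrong as stated: it is not true that $(\overline{\mathrm{co}}(E_j))^{\ast}=E_j^{\ast}$, since in general $|\overline{\mathrm{co}}(E_j)|>|E_j|$ (take $E_j$ to be two far-apart balls), so the rearranged sets are different balls. Fortunately you only need the inclusion $E_j^{\ast}\subseteq(\overline{\mathrm{co}}(E_j))^{\ast}$, which is immediate from $|E_j|\leq|\overline{\mathrm{co}}(E_j)|$; then $\sup_{y_j\in E_j^{\ast}}\leq\sup_{y_j\in(\overline{\mathrm{co}}(E_j))^{\ast}}\leq\sup_{y_j\in\overline{\mathrm{co}}(E_j)}=\sup_{y_j\in E_j}$, so the reduction survives with this one-line repair. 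You should also note that when $E_j$ has finite measure but is unbounded, $\overline{\mathrm{co}}(E_j)$ need not be compact (or even of finite measure), so the passage to ``compact convex of finite measure'' requires a preliminary truncation $E_j\cap B(0,R)$ with $R\to\infty$, using that $(E_j\cap B(0,R))^{\ast}$ increases to $E_j^{\ast}$ and the right-hand suprema increase to that over $E_j$; the paper glosses over this point as well, and with these two small fixes your argument matches the paper's proof.
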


\medskip

Obviously, it follows from Lemma 2.4 that
$$\sup\limits_{y_{1} \in E_{1}^{*}, \dots, y_{n} \in E_{n}^{*} }  \det(0, y_{1}, \dots, y_{n})
\leq 
\sup\limits_{y_{1} \in E_{1}, \dots, y_{n} \in E_{n} }  \det(0, y_{1}, \dots, y_{n}),  \eqno (2.15) $$
and 
$$\sup\limits_{y_{1} \in E_{1}^{*}, \dots, y_{n+1} \in E_{n+1}^{*} }  \det(y_{1}, \dots, y_{n+1})
\leq
\sup\limits_{y_{1} \in E_{1}, \dots, y_{n+1} \in E_{n+1} }  \det(y_{1}, \dots, y_{n+1})   \eqno (2.16) $$
hold for any measurable sets $E_{j} \subset \mathbb{R}^{n}$, $1 \leq j \leq n+1$.

\bigskip

From Lemma 2.4  we obtain the  multilinear functional  rearrangement inequalities.

\begin{theorem}
Let $f_{j}$ be  nonnegative measurable functions vanishing at infinity  on $\mathbb{R}^{n}$. 
Let $A=\{a_{ij}\} \in \mathrm{GL}_{n}(\mathbb{R})$,
then
$$\displaystyle{\sup_{y_{j}}} \prod_{j=1}^{n} f_{j}^{\ast}(\sum\limits_{i=1}^{n}a_{ij}y_{i})  \det(0, y_{1}, \dots, y_{n})
\leq   \displaystyle{\sup_{y_{j}}}  \prod_{j=1}^{n} f_{j}(\sum\limits_{i=1}^{n}a_{ij}y_{i})  \det(0, y_{1}, \dots, y_{n}).   \eqno (2.17)$$
Let $A=\{a_{ij}\} \in \mathrm{GL}_{(n+1)}(\mathbb{R})$, then
$$\displaystyle{\sup_{y_{j}}} \prod_{j=1}^{n+1} f_{j}^{\ast}(\sum\limits_{i=1}^{n+1}a_{ij}y_{i})  \det(y_{1}, \dots, y_{n+1})
\leq   \displaystyle{\sup_{y_{j}}}  \prod_{j=1}^{n+1} f_{j}(\sum\limits_{i=1}^{n+1}a_{ij}y_{i})  \det(y_{1}, \dots, y_{n+1}),   \eqno (2.18)$$
where the $\sup$  is the essential supremum.

\end{theorem}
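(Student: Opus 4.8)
The strategy is to deduce the functional (super-level-set) inequalities (2.17)--(2.18) from the set-theoretic inequalities recorded in Lemma 2.4 by the standard layer-cake device, after first linearising the product structure. I will write out the argument for (2.17); the case (2.18) is identical once one works in the $(n+1)$-fold product and uses the full generality of Lemma 2.4 with $l=n+1$.

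First I would reduce to the ``diagonal'' case $A=I$. Write $z_j=\sum_{i=1}^n a_{ij}y_i$, so that the change of variables $y\mapsto z$ is the invertible linear map given by $A$ (here I use $A\in \mathrm{GL}_n(\mathbb{R})$). Since $\det(0,y_1,\dots,y_n)=|\det A|^{-1}\det(0,z_1,\dots,z_n)$ up to the obvious Jacobian factor — more precisely $\det(0,y_1,\dots,y_n)$ and $\det(0,z_1,\dots,z_n)$ differ by the multiplicative constant $|\det(a_{ij})|$, which is independent of the $y_i$ — taking the supremum over all $y_j$ is the same as taking the supremum over all $z_j$, up to that fixed constant which cancels from both sides of the claimed inequality. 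Thus it suffices to prove
$$\sup_{z_j}\ \prod_{j=1}^n f_j^{\ast}(z_j)\,\det(0,z_1,\dots,z_n)\ \le\ \sup_{z_j}\ \prod_{j=1}^n f_j(z_j)\,\det(0,z_1,\dots,z_n),$$
which is the special case of (2.17) with $A=I$. Here I should be slightly careful: strictly, the substitution replaces $f_j$ by $f_j$ evaluated at a \emph{linear} image of the variables, so to land exactly on the clean diagonal form one instead keeps $A$ implicit and applies the general matrix version of Lemma 2.4 at the end; either bookkeeping works, and I would pick whichever is cleaner in the write-up. In what follows I treat the diagonal form.

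Next comes the layer-cake step. Fix $\varepsilon>0$ and let $S$ denote the right-hand supremum (an essential supremum; if $S=\infty$ there is nothing to prove, so assume $S<\infty$). Pick a threshold and write each $f_j$ via its super-level sets: for $t_j>0$ set $E_j(t_j)=\{x:f_j(x)>t_j\}$, which has finite measure since $f_j$ vanishes at infinity, and recall $E_j(t_j)^{\ast}=\{f_j^{\ast}>t_j\}$ by the definition of the symmetric decreasing rearrangement. The key observation is the pointwise identity $\prod_j f_j(z_j)\,\det(0,z_1,\dots,z_n) = \int_0^\infty\!\!\cdots\!\!\int_0^\infty \prod_j \chi_{E_j(t_j)}(z_j)\,\det(0,z_1,\dots,z_n)\,dt_1\cdots dt_n$, and likewise with each $f_j$ replaced by $f_j^{\ast}$ and $E_j(t_j)$ by $E_j(t_j)^{\ast}$. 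Taking suprema, I would argue that for almost every choice of $(t_1,\dots,t_n)$,
$$\sup_{z_j\in E_j(t_j)^{\ast}}\det(0,z_1,\dots,z_n)\ \le\ \sup_{z_j\in E_j(t_j)}\det(0,z_1,\dots,z_n)$$
by Lemma 2.4 (with $l=n$, $A=I$), and then integrate/take the essential supremum over the $t_j$. Concretely: if $\prod_j f_j^{\ast}(z_j)\,\det(0,z)>S'$ for some $z$ on a set of positive measure, one extracts a cube of thresholds $(t_1,\dots,t_n)$ on which all the $\chi_{E_j(t_j)^{\ast}}(z_j)=1$ and $\det(0,z)$ is bounded below, feeds this into Lemma 2.4 to find $z_j'\in E_j(t_j)$ with $\det(0,z_1',\dots,z_n')$ at least as large, and then $\prod_j f_j(z_j')\ge \prod_j t_j$ gives $\prod_j f_j(z_j')\,\det(0,z')> S'$ as well — contradicting $S'>S$. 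This is the usual equivalence between a sup-weighted-by-a-product inequality and the corresponding family of set inequalities.

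The step I expect to be the main technical obstacle is precisely this passage between the supremum of a product of functions times a geometric weight and the family of set-level statements, carried out with essential (rather than genuine) suprema: one must check that the ``bad set'' of $z$'s where the left-hand expression exceeds the claimed bound, if nonempty up to measure zero, forces a correspondingly positive-measure family of threshold vectors to which Lemma 2.4 applies, and that no null-set pathology (e.g.\ $f_j^{\ast}$ being defined only up to a.e.\ equivalence, or $\det(0,z_1,\dots,z_n)$ vanishing on a large set) obstructs the extraction. Handling the $\det=0$ locus is harmless since it contributes $0$ to both sides; the real care is in the measurability and Fubini bookkeeping for the product of super-level sets. Once that is in place, (2.18) follows verbatim with $n$ replaced by $n+1$ throughout and the base case being the $l=n+1$ instance of Lemma 2.4, noting that $\det(y_1,\dots,y_{n+1})$ is translation invariant so no basepoint $0$ needs to be adjoined.
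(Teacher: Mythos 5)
Your treatment of (2.17) is essentially the paper's argument: reduce to the case $A=I$ via the substitution $\tilde y_j=\sum_i a_{ij}y_i$, which only changes the weight $\det(0,y_1,\dots,y_n)$ by the constant factor $|\det A|$, and then pass from the functional statement to the set statement of Lemma 2.4 by a super-level-set extraction. (The paper implements that last step by contradiction, building sets $E_1,\dots,E_l$ with thresholds $s+\varepsilon/t$ and infima of $f_j$ over the previously constructed sets; this is just a more explicit version of your ``choose $t_j$ slightly below $f_j^{\ast}(z_j)$'' device, and it is no more careful than you are about the essential-sup versus genuine-sup issue you flag, which can be repaired in either version by working with density points of the level sets so that the configuration you extract comes with a positive-measure piece of $E_1(t_1)\times\cdots\times E_n(t_n)$ on which $\det$ stays above the required bound.)

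The place where your plan as written would break is (2.18): it does not follow ``verbatim with $n$ replaced by $n+1$''. The substitution $\tilde y_j=\sum_{i=1}^{n+1}a_{ij}y_i$ does \emph{not} carry $\det(y_1,\dots,y_{n+1})$ to a constant multiple of $\det(\tilde y_1,\dots,\tilde y_{n+1})$ (already for $n=1$, $|\tilde y_1-\tilde y_2|=|(a_{11}-a_{12})y_1+(a_{21}-a_{22})y_2|$ is a fixed multiple of $|y_1-y_2|$ only for special $A$), so the translation-invariance remark and an appeal to the instance (2.16) of Lemma 2.4 as ``base case'' do not suffice. What the substitution actually produces, writing $A^{-1}=\{b_{ij}\}$, is the weight $\det\bigl(0,\sum_{i=1}^{n+1}c_{i1}\tilde y_i,\dots,\sum_{i=1}^{n+1}c_{in}\tilde y_i\bigr)$ with $c_{ik}=b_{ik}-b_{i(n+1)}$, i.e.\ a general $(n+1)\times n$ coefficient matrix; this is precisely why the paper states Lemma 2.4, and proves the claim inside Theorem 2.5, for arbitrary $l\times n$ matrices. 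So for (2.18) your level-set extraction must be run with this general linear-combination weight — which your own hedge (``keep $A$ implicit and apply the general matrix version of Lemma 2.4 at the end'') already allows. Once that substitution is made explicit, your argument goes through and coincides with the paper's proof.
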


\begin{proof}
Let $\tilde{y}_j =\sum\limits_{i=1}^{n}a_{ij}y_{i}$, $1 \leq j \leq n$,  so
$$\det(0, y_{1}, \dots, y_{n})= \det(0, \tilde{y}_{1}, \dots, \tilde{y}_{n})  |\det(A)|^{-1}. $$
Then for (2.17) it suffices to prove 
$$\displaystyle{\sup_{\tilde{y}_{j}}} \prod_{j=1}^{n} f_{j}^{\ast}(\tilde{y}_{j})  \det(0, \tilde{y}_{1}, \dots, \tilde{y}_{n})
\leq   \displaystyle{\sup_{\tilde{y}_{j}}}  \prod_{j=1}^{n} f_{j}(\tilde{y}_{j})  \det(0, \tilde{y}_{1}, \dots, \tilde{y}_{n}). \eqno (2.19)$$
Similarly, for (2.18) 
denote  $\tilde{y}_j =\sum\limits_{i=1}^{n+1}a_{ij}y_{i}$, $1 \leq j \leq n+1$.
Since 
$$\left(\begin{array}{ccc}
   y_1  &      \dots  &  y_{n+1}  
 \end{array}
\right) =
\left(\begin{array}{ccc}
  \tilde{y}_1 &      \dots  &  \tilde{y}_{n+1}  
 \end{array}
\right)  A^{-1},$$
$\det(y_{1}, \dots, y_{n+1})$ can be written as the form
$$\det(0, \sum\limits_{i=1}^{n+1} c_{i1} \tilde{y}_{i}, \sum\limits_{i=1}^{n+1} c_{i2} \tilde{y}_{i},   \dots,  \sum\limits_{i=1}^{n+1} c_{in} \tilde{y}_{i}).$$
Specifically, suppose  $A^{-1}=\{b_{ij}\}_{n+1}$, then
by calculation we have $c_{ik}=b_{ik}-b_{i(n+1)}$ with  $1 \leq k \leq n$, $1 \leq i \leq n+1$.
Hence (2.18) becomes 
$$\begin{array}{ll}
& \ \ \ \ \ \displaystyle{\sup_{\tilde{y}_{j}}} \prod_{j=1}^{n+1} f_{j}^{\ast}(\tilde{y}_{j}) 
\det(0, \sum\limits_{i=1}^{n+1} c_{i1} \tilde{y}_{i}, \sum\limits_{i=1}^{n+1} c_{i2} \tilde{y}_{i},   \dots,  \sum\limits_{i=1}^{n+1} c_{in} \tilde{y}_{i}) \\
& \leq   \displaystyle{\sup_{\tilde{y}_{j}}}  \prod_{j=1}^{n+1} f_{j}(\tilde{y}_{j})  
\det(0, \sum\limits_{i=1}^{n+1} c_{i1} \tilde{y}_{i}, \sum\limits_{i=1}^{n+1} c_{i2} \tilde{y}_{i},   \dots,  \sum\limits_{i=1}^{n+1} c_{in} \tilde{y}_{i}).
\end{array}\eqno (2.20)$$

\medskip

We claim that for any  $l \geq n$, for any $l \times n$ real matrix $B=\{c_{ik}\}$
$$\displaystyle{\sup_{y_{j}}} \prod_{j=1}^{l} f_{j}^{\ast}(y_{j})  \det(0, \sum\limits_{i=1}^{l} c_{i1} y_{i}, \dots,  \sum\limits_{i=1}^{l} c_{in} y_{i})
\leq   \displaystyle{\sup_{y_{j}}}  \prod_{j=1}^{l} f_{j}(y_{j})  \det(0, \sum\limits_{i=1}^{l} c_{i1} y_{i}, \dots,  \sum\limits_{i=1}^{l} c_{in} y_{i})  $$
holds.
Suppose 
$$\displaystyle{\sup_{y_{j}}}  \prod_{j=1}^{l} f_{j}(y_{j})  \det(0, \sum\limits_{i=1}^{l} c_{i1} y_{i}, \dots,  \sum\limits_{i=1}^{l} c_{in} y_{i})=s< \infty.$$
We assume for a contradiction  that
$$\displaystyle{\sup_{y_{j}}}  \prod_{j=1}^{l} f_{j}^{\ast}(y_{j})  \det(0, \sum\limits_{i=1}^{l} c_{i1} y_{i}, \dots,  \sum\limits_{i=1}^{l} c_{in} y_{i})>s.$$
Then there exist  positive $\varepsilon$  and a set $ G \subset \mathbb{R}^{n} \times \dots  \times \mathbb{R}^{n}$ such that $|G|>0$ and
for all $(x_1, \dots, x_l) \in G$
we have
$$ \prod_{j=1}^{l} f_{j}^{\ast}(x_{j})  \det(0, \sum\limits_{i=1}^{l} c_{i1} x_{i}, \dots,  \sum\limits_{i=1}^{l} c_{in} x_{i})> s+\varepsilon,  \eqno (2.21)$$
which gives
$$f_{1}^{\ast}(x_1) >   (s+ \varepsilon)     (\prod_{j=2}^{l} f_{j}^{\ast}(x_j) 
\det(0, \sum\limits_{i=1}^{l} c_{i1} x_{i}, \dots,  \sum\limits_{i=1}^{l} c_{in} x_{i}))^{-1}. \eqno (2.22)$$
Define the set 
$$E_{1}:=\{ y_1:   f_{1}(y_1) > (s+\varepsilon)  (\prod_{j=2}^{l} f_{j}^{\ast}(x_j) \det(0, \sum\limits_{i=1}^{l} c_{i1} x_{i}, \dots,  \sum\limits_{i=1}^{l} c_{in} x_{i}))^{-1} \},$$
so by the property of decreasing rearrangement together with (2.22) 
$$ |E_1|> v_{n} |x_{1}|^{n}.    $$
From the definition of $E_1$ 
$$ f_{2}^{\ast}(x_2) >( s+\frac{\varepsilon}{2}) \  (\displaystyle{\inf_{y_{1} \in E_1 }} f_{1}(y_1)  \prod_{j=3}^{l} f_{j}^{\ast}(x_j)  
\det(0, \sum\limits_{i=1}^{l} c_{i1} x_{i}, \dots,  \sum\limits_{i=1}^{l} c_{in} x_{i}) )^{-1}.$$
We then define 
$$E_2= \{ y_2:  f_{2}(y_2)>( s+\frac{\varepsilon}{2}) (\displaystyle{\inf_{y_{1} \in E_1  }} f_{1}(y_1) \prod_{j=3}^{l} f_{j}^{\ast}(x_j)  
\det(0, \sum\limits_{i=1}^{l} c_{i1} x_{i}, \dots,  \sum\limits_{i=1}^{l} c_{in} x_{i}) \},$$
so
$$ |E_2|> v_{n} |x_{2}|^{n}.    $$
Overall, we can take the similar arguments to define sets $E_t$, $1< t< l$
$$E_t =\{ y_t:   f_{t}(y_t) > 
(s+\frac{\varepsilon}{t})  (\prod_{j=1}^{t-1} \inf\limits_{y_{j} \in E_j }   f_{j}(y_j)    \prod_{j=t+1}^{l} f_{j}^{\ast}(x_j) 
\det(0, \sum\limits_{i=1}^{l} c_{i1} x_{i}, \dots,  \sum\limits_{i=1}^{l} c_{in} x_{i}))^{-1} \},$$
and
$$E_l=\{ y_l:   f_{l}(y_l) > (s+\frac{\varepsilon}{l})  (\prod_{j=1}^{l-1} \inf\limits_{y_{j} \in E_j }   f_{j}(y_j)   
\det(0, \sum\limits_{i=1}^{l} c_{i1} x_{i}, \dots,  \sum\limits_{i=1}^{l} c_{in} x_{i}) )^{-1} \}.$$
It is easily seen that for each $j=1, \dots, l$
$$|E_j|> v_{n} |x_{j}|^{n}, \eqno (2.23)$$
and thus $x_j \in E_{j}^{\ast}$.
It follows from Lemma 2.4 that
$$\sup\limits_{y_{1} \in E_{1}^{*}, \dots, y_{l} \in E_{l}^{*}}     \det(0, \sum\limits_{i=1}^{l} c_{i1} y_{i}, \dots,  \sum\limits_{i=1}^{l} c_{in} y_{i}) \leq 
\sup\limits_{y_{1} \in E_1,  \dots, y_{l} \in E_l}  \det(0, \sum\limits_{i=1}^{l} c_{i1} y_{i}, \dots,  \sum\limits_{i=1}^{l} c_{in} y_{i}).$$
That together with $x_{j} \in E_{j}^{\ast}$, $j=1, \dots, l$,  implies
$$ \det(0, \sum\limits_{i=1}^{l} c_{i1} x_{i}, \dots,  \sum\limits_{i=1}^{l} c_{in} x_{i})  \leq 
\sup\limits_{y_{1} \in E_1, \dots, y_{n} \in E_n}  \det(0, \sum\limits_{i=1}^{l} c_{i1} y_{i}, \dots,  \sum\limits_{i=1}^{l} c_{in} y_{i}).  \eqno (2.24)$$
From the definition of $E_l$ we have for any $y_j \in E_j$, $1 \leq j \leq l$
\begin{align*}
& \ \ \ \  \prod\limits_{j=1}^{l} f_{j}(y_{j})   \det(0, \sum\limits_{i=1}^{l} c_{i1} y_{i}, \dots,  \sum\limits_{i=1}^{l} c_{in} y_{i}) \\
&> (s + \frac{\varepsilon}{l} ) (\det(0, \sum\limits_{i=1}^{l} c_{i1} x_{i}, \dots,  \sum\limits_{i=1}^{l} c_{in} x_{i}) )^{-1} 
\det(0, \sum\limits_{i=1}^{l} c_{i1} y_{i}, \dots,  \sum\limits_{i=1}^{l} c_{in} y_{i}).
\end{align*}
Therefore, together with (2.24) we obtain
\begin{align*}
s &\geq  \sup\limits_{y_{1} \in E_1,  \dots, y_{l} \in E_l} \prod_{j=1}^{l} f_{j}(y_{j}) 
\det(0, \sum\limits_{i=1}^{l} c_{i1} y_{i}, \dots,  \sum\limits_{i=1}^{l} c_{in} y_{i}) \\
&>(s + \frac{\varepsilon}{l} ) (\det(0, \sum\limits_{i=1}^{l} c_{i1} x_{i}, \dots,  \sum\limits_{i=1}^{l} c_{in} x_{i}) )^{-1}  
\sup\limits_{y_{1} \in E_1,  \dots, y_{l} \in E_l} 
\det(0, \sum\limits_{i=1}^{l} c_{i1} y_{i}, \dots,  \sum\limits_{i=1}^{l} c_{in} y_{i}) \\
&>s,
\end{align*}
which gives a contradiction.
That completes the proof of claim.
Therefore, (2.19)-(2.20) hold.

\end{proof}

\medskip

\begin{remark}
We use a counterexample to show that Theorem 2.5  is false if $\det(A)=0$.
Let $f_1=\chi_{A}$, $f_2= \chi_{B}$ where $A, B$ are disjoint  measurable sets in $\mathbb{R}^{2}$ with non-zero measure.
Obviously,
$$\sup\limits_{y_1, y_2 \in \mathbb{R}^{2}} f_1 (y_1 +y_2) f_2 (y_1 +y_2) \det(0, y_{1},  y_{2})=0,$$
while 
$$\sup\limits_{y_1, y_2} f_{1}^{\ast} (y_1 +y_2) f_{2}^{\ast} (y_1 +y_2) \det(0, y_{1},  y_{2}) \neq 0.$$
Likewise,
for the same sets $A, B$ above, let $f_1=\chi_{A}$, $f_2=f_3= \chi_{B}$.
Then 
$$\sup\limits_{y_1, y_2, y_3 \in \mathbb{R}^{2}} f_1 (y_1 +y_2+y_3) f_2 (y_1 +y_2+y_3)f_ 3(y_3) \det(y_{1},  y_{2}, y_{3})=0,$$
while
$$\sup\limits_{y_1, y_2, y_3 \in \mathbb{R}^{2}} f_{1}^{\ast} (y_1 +y_2+y_3) f_{2}^{\ast} (y_1 +y_2+y_3) f_{3}^{\ast}(y_3) \det(y_{1},  y_{2}, y_{3}) \neq 0.$$

\end{remark}

\bigskip

Let $A=I$. From Theorem 2.5 it is straightforward to see that
$$\displaystyle{\sup_{y_{j}}} \prod_{j=1}^{n} f_{j}^{\ast}(y_{j})  \det(0, y_{1}, \dots, y_{n})
\leq   \displaystyle{\sup_{y_{j}}}  \prod_{j=1}^{n} f_{j}(y_{j})  \det(0, y_{1}, \dots, y_{n}),  \eqno (2.25)$$
$$\displaystyle{\sup_{y_{j}}} \prod_{j=1}^{n+1} f_{j}^{\ast}(y_{j})  \det(y_{1}, \dots, y_{n+1})
\leq   \displaystyle{\sup_{y_{j}}}  \prod_{j=1}^{n+1} f_{j}(y_{j})  \det(y_{1}, \dots, y_{n+1}).  \eqno (2.26)$$

Let $f_{j}= \chi_{E_{j}}$, and $E_j$ be measurable sets in $\mathbb{R}^{n}$.
Applying  (2.25)-(2.26)  we obtain  the following  two sharp ``multilinear" determinant inequalties suggested by the multilinear perspective of (2.2):
$$ \displaystyle{\prod_{j=1}^{n}} \  |E_{j}|^{\frac{1}{n}} \leq A_n
\sup\limits_{y_{1} \in E_{1}, \dots, y_{n} \in E_{n} }  \det(0, y_{1}, \dots, y_{n}),  \eqno (2.27) $$
and
$$ \displaystyle{\prod_{j=1}^{n+1}} \  |E_{j}|^{\frac{1}{n+1}} \leq B_n
\sup\limits_{y_{1} \in E_{1}, \dots, y_{n+1} \in E_{n+1} }  \det(y_{1}, \dots, y_{n+1}).   \eqno (2.28) $$
Moreover, they are both  extremised by balls centred at $0$.
It follows from (2.25)-(2.26) that we also obtain the optimisers for (1.7) and (1.8) which is the special case when $E_{j}=E$.

\medskip

It should be pointed out that  (2.25)-(2.26) improves multilinear rearrangement inequalities (2.29), (2.30) given in \cite{Chen}.
For each $1 \leq i \leq n$
$$\displaystyle{\sup_{y_{j}}} \prod_{j=1}^{n} f_{j}^{\ast i}(y_{j})  \det(0, y_{1}, \dots, y_{n})
\leq   \displaystyle{\sup_{y_{j}}}  \prod_{j=1}^{n} f_{j}(y_{j})  \det(0, y_{1}, \dots, y_{n}),  \eqno (2.29)$$
and
$$\displaystyle{\sup_{y_{j}}} \prod_{j=1}^{n+1} f_{j}^{\ast i}(y_{j})  \det(y_{1}, \dots, y_{n+1})
\leq   \displaystyle{\sup_{y_{j}}}  \prod_{j=1}^{n+1} f_{j}(y_{j})  \det(y_{1}, \dots, y_{n+1}),  \eqno (2.30)$$
where $f_{j}^{\ast i}$  is the Steiner symmetrisation of $f_j$  with respect to the $i$-th coordinate.

\medskip

Finally we give the best constant of inequality (2.1) mainly  applying the Brascamp-Lieb-Luttinger rearrangement inequality. 
In 1974, Brascamp, Lieb and Luttinger \cite{BLL} proved the following inequality  (2.31)
which is a generalisation of Riesz's rearrangement inequality \cite{Riesz}.  

Let $f_{j}$ be nonnegative measurable functions on $\mathbb{R}^{n}$ that vanish at infinity, $j=1, \dots, m$.
Let $k \leq m$  and let  $B=\{b_{ij}\}$ be a $k\times m$  matrix with $1 \leq i \leq k$,  $1 \leq j \leq m$. 
Define
$$I(f_{1}, \dots, f_{m}):= \int_{(\mathbb{R}^{n})^{k}} \prod\limits_{j=1}^{m} f_{j}(\sum\limits_{i=1}^{k} b_{ij}x_{i}) dx_{1} \dots dx_{k}.$$
Then
$$I(f_{1}, \dots, f_{m}) \leq I(f_{1}^{\ast}, \dots, f_{m}^{\ast}).    \eqno(2.31)$$

\medskip

\begin{theorem}
Let $f_{j}$ be  nonnegative measurable functions vanishing at infinity  on $\mathbb{R}^{n}$,  
Define 
$$J(f_1, \dots, f_{n+1})=\int_{(\mathbb{R}^{n})^{n}}  \prod_{j=1}^{n} f_{j}(y_{j})  f_{n+1} (\det(0, y_{1}, \dots, y_{n}))  dy_{1} \dots dy_{n}$$
and 
$$G(f_1, \dots, f_{n+2})= \int_{(\mathbb{R}^{n})^{n+1}}  \prod_{j=1}^{n+1} f_{j}(y_{j})  f_{n+2} (\det(y_{1}, \dots, y_{n+1}))  dy_{1} \dots dy_{n+1}$$
Then 
$$J(f_1, \dots, f_{n+1}) \leq J(f_{1}^{\ast}, \dots, f_{n+1}^{\ast}), \eqno (2.32)$$
and
$$G(f_1, \dots, f_{n+2}) \leq G(f_{1}^{\ast}, \dots, f_{n+2}^{\ast}).  \eqno (2.33)$$

\end{theorem}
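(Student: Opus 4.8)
The plan is to derive both (2.32) and (2.33) from the Brascamp--Lieb--Luttinger inequality (2.31) by a fibering argument, and then to upgrade a single Steiner symmetrisation to the full symmetric decreasing rearrangement by iteration, just as in the passage from Theorem 2.3 to Lemma 2.4. Since $\det(0,y_1,\dots,y_n)\ge 0$ and $\det(y_1,\dots,y_{n+1})\ge 0$, only the values of $f_{n+1}$ (resp.\ $f_{n+2}$) on $[0,\infty)$ enter the definitions of $J$ and $G$, so we regard $f_{n+1},f_{n+2}$ as even functions on $\mathbb{R}$; then $f_{n+1}^{\ast},f_{n+2}^{\ast}$ are their one-dimensional symmetric decreasing rearrangements. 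I will spell out (2.32); the proof of (2.33) is the same with $\det(y_1,\dots,y_{n+1})=|\det(y_1-y_{n+1},\dots,y_n-y_{n+1})|$ in place of $\det(0,y_1,\dots,y_n)$ and one extra integration variable.

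\emph{Step 1 (one Steiner symmetrisation via (2.31)).} By the invariance of the determinant and of Lebesgue measure under a common rotation, together with the conjugation identity (2.14), it is enough to show $J(f_1,\dots,f_{n+1})\le J(\mathcal{R}_{n}f_1,\dots,\mathcal{R}_{n}f_n,f_{n+1})$, the case of a general direction $u$ following exactly as in the proof of Theorem 2.3. Write $y_j=(y_j',y_{jn})$ with $y_j'\in\mathbb{R}^{n-1}$ and freeze $y_1',\dots,y_n'$. Laplace expansion of $\det(0,y_1,\dots,y_n)$ along its last row shows that, viewed as a function of $(y_{1n},\dots,y_{nn})\in\mathbb{R}^{n}$, it equals $\bigl|\sum_{j=1}^{n}m_j(y')\,y_{jn}\bigr|$, where each $m_j(y')$ is, up to sign, a minor depending only on the frozen coordinates. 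Consequently the inner integral in $(y_{1n},\dots,y_{nn})$ is exactly of the form to which (2.31) applies, namely $\int_{\mathbb{R}^{n}}\prod_{j=1}^{n+1}g_j\bigl(\sum_{i=1}^{n}b_{ij}t_i\bigr)\,dt_1\cdots dt_n$ with underlying space $\mathbb{R}^{1}$, $k=n$, $m=n+1$, $b_{ij}=\delta_{ij}$ for $1\le j\le n$, $b_{i,n+1}=m_i(y')$, $g_j=f_j(y_j',\cdot)$ and $g_{n+1}=f_{n+1}$ --- here the evenness of $f_{n+1}$ lets us drop the absolute value. For almost every $y'$ the functions $g_1,\dots,g_n$ vanish at infinity, so (2.31) bounds the inner integral by the same expression with every $g_j$ replaced by its one-dimensional symmetric decreasing rearrangement. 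Integrating this in $y'\in(\mathbb{R}^{n-1})^{n}$, using Fubini together with $(\mathcal{R}_{n}f_j)(y_j',\cdot)=\bigl(f_j(y_j',\cdot)\bigr)^{\ast}$ and $f_{n+1}^{\ast}=f_{n+1}$, yields the one-step inequality. In particular $J$ is non-decreasing along any sequence of Steiner symmetrisations of $f_1,\dots,f_n$, with $f_{n+1}$ already symmetric from the start.

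\emph{Step 2 (iteration).} Fix a universal sequence of unit directions $u_1,u_2,\dots$ along which iterated Steiner symmetrisations converge to the symmetric rearrangement (e.g.\ cycling through a basis with mutually irrational angle differences, as recalled before Lemma 2.4; see \cite{Bianchi},\cite{Klain}). Write $f_j^{(k)}$ for the $k$-fold iterated Steiner symmetrisation of $f_j$ in these directions; Step 1 gives $J(f_1,\dots,f_{n+1})\le J(f_1^{(k)},\dots,f_n^{(k)},f_{n+1}^{\ast})$ for every $k$. After a routine truncation we may assume each $f_j$ is bounded and that $f_1,\dots,f_n$ are compactly supported; then all the $f_j^{(k)}$ ($1\le j\le n$) lie in a fixed ball and satisfy $f_j^{(k)}\to f_j^{\ast}$ in $L^{1}$, hence, along a subsequence, almost everywhere. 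Since the integrands are then dominated by a fixed integrable function, dominated convergence gives $J(f_1^{(k)},\dots,f_n^{(k)},f_{n+1}^{\ast})\to J(f_1^{\ast},\dots,f_n^{\ast},f_{n+1}^{\ast})$ along that subsequence, and combining with the monotonicity from Step 1 proves (2.32) for the truncated data; the general case follows by monotone convergence. Running the same two steps with $n+1$ integration variables --- freezing the first $n-1$ coordinates of each of $y_1,\dots,y_{n+1}$ and using that $|\det(y_1-y_{n+1},\dots,y_n-y_{n+1})|$ is, up to sign and absolute value, a linear form in $(y_{1n},\dots,y_{n+1,n})$ --- establishes (2.33).

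The point demanding the most care is the observation underlying Step 1: once all but one coordinate of each point is frozen, the determinant is an (absolute value of an) ordinary linear form in the remaining variables, which is precisely what makes the one-dimensional case of (2.31) applicable; this must be combined with the rotational book-keeping of (2.14) so as to reach every symmetrisation direction. The limiting argument in Step 2 is standard once the truncation secures a dominating function, and no hypothesis beyond vanishing at infinity is needed on $f_{n+1},f_{n+2}$, since after the first symmetrisation they are already in symmetric form and are untouched by the iteration. Specialising $f_j=\chi_{E_j}$ and $f_{n+1}=\chi_{(-\delta,\delta)}$ in (2.32), and computing the right-hand side for balls, then returns the sharp constant in (2.1)--(2.2).
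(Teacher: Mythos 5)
Your overall route is the same as the paper's: apply the Brascamp--Lieb--Luttinger inequality fibrewise (freezing all but one coordinate of each $y_j$, where the determinant becomes the absolute value of a linear form) to get monotonicity of $J$ and $G$ under one Steiner symmetrisation, transfer this to arbitrary directions by the rotation identity (2.14), and then iterate symmetrisations to reach the radial rearrangement. The only real differences are bookkeeping: the paper first reduces to characteristic functions by the layer cake formula and runs the limit at the level of sets, using the adaptive Brascamp--Lieb--Luttinger construction (directions chosen at each stage to nearly minimise $|\mathcal{S}_{u}(E_m)\triangle E^{\ast}|$) and convergence in symmetric difference, whereas you work directly with functions and finish by truncation and dominated convergence.

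There is, however, one step in your Step 2 that is not justified as written. You assert that a \emph{fixed} sequence of directions, obtained by cycling through a basis with mutually irrational angle differences, makes the iterated Steiner symmetrisations of arbitrary (truncated) measurable functions converge in $L^{1}$ to their symmetric decreasing rearrangements, citing the discussion before Lemma 2.4 and \cite{Bianchi}, \cite{Klain}. Those results concern compact \emph{convex} bodies and Hausdorff convergence; in Lemma 2.4 the reduction to convex sets is legitimate because the suprema there are unchanged under passing to closed convex hulls, but $J$ and $G$ are integrals and do change under convexification, so no such reduction is available here and the convex-body convergence theorem cannot be invoked for general level sets of your $f_j$. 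The repair is exactly what the paper does: use the Brascamp--Lieb--Luttinger adaptive scheme (recalled in the paper's proof of Theorem 2.7), which produces, for bounded measurable sets, a sequence of symmetrisations converging in symmetric difference; combined with the $L^{1}$-nonexpansivity of Steiner symmetrisation (or an interleaving/diagonal argument) one gets a single sequence working simultaneously for all the functions involved, after which your truncation and dominated convergence argument goes through. Separately, the claim $f_{n+1}^{\ast}=f_{n+1}$ in Step 1 is false for a merely even function (even $\neq$ symmetric decreasing), but this is harmless: the fibrewise BLL step replaces $f_{n+1}$ by $f_{n+1}^{\ast}$, which is precisely what the right-hand side of (2.32) requires, and $f_{n+1}^{\ast}$ is then unchanged by all subsequent symmetrisations; your Step 2 already carries $f_{n+1}^{\ast}$ correctly, so only the wording needs fixing.
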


\begin{proof}
By the layer cake representation, it suffices to show that for any $E_{j}$ of finite volume in $\mathbb{R}^{n}$,
$1 \leq j \leq n+2$,
$$J(E_{1}, \dots, E_{n+1}) \leq J(E_{1}^{\ast}, \dots, E_{n+1}^{\ast}),   \ \  
G(E_{1}, \dots, E_{n+2}) \leq G(E_{1}^{\ast}, \dots, E_{n+2}^{\ast}).  $$
For any measurable $F_{j} \subset \mathbb{R}$, $1 \leq j \leq n+1$, 
Brascamp-Lieb-Luttinger rearrangement inequality implies that
\begin{align*}
& \ \ \ \int_{(\mathbb{R}^{n})^{n}} \prod\limits_{j=1}^{n} \chi_{F_j}(x_j) \chi_{F_{n+1}} (\sum\limits_{j=1}^{n} a_{j}x_{j} ) dx_{1} \dots  dx_{n} \\
&\leq \int_{(\mathbb{R}^{n})^{n}} \prod\limits_{j=1}^{n} \chi_{F_{j}^{\ast}}(x_j) \chi_{F_{n+1}^{*}} (\sum\limits_{j=1}^{n} a_{j}x_{j} ) dx_{1} \dots  dx_{n}.
\end{align*}
As before, since $\det(0, y_{1}, \dots, y_{n})$ is the linear combination of $y_{11}, \dots, y_{n1}$,
similar to the proof of (2.10) 
we have
$$J(E_{1}, \dots, E_{n+1}) \leq J(\mathcal{S}_{e_1}(E_1), \dots, \mathcal{S}_{e_1}(E_{n+1}) ). \eqno(2.34)$$
Note that $J(E_{1}, \dots, E_{n+1})$ is invariant under  $O(n)$.
By the property of
$$\mathcal{S}_{\rho e_{i}}(E)=\rho \circ \mathcal{S}_{e_{i}}(\rho^{-1}(E)),$$
we obtain for any $u \in \mathbb{S}^{n-1}$ that is a unit vector in $\mathbb{R}^{n}$,
$$J(E_{1}, \dots, E_{n+1}) \leq J(\mathcal{S}_{u}(E_1), \dots, \mathcal{S}_{u}(E_{n+1}) ).  \eqno(2.35)$$
Likewise, since $\det( y_{1}, \dots, y_{n+1})$ can be seen as the linear combination of $y_{11}, \dots, y_{(n+1)1}$,
and  the Brascamp-Lieb-Luttinger rearrangement inequality 
\begin{align*}
& \ \ \ \int_{(\mathbb{R}^{n})^{n}} \prod\limits_{j=1}^{n+1} \chi_{F_j}(x_j) \chi_{F_{n+2}} (\sum\limits_{j=1}^{n+1} a_{j}x_{j} ) dx_{1} \dots  dx_{n+1} \\
&\leq \int_{(\mathbb{R}^{n})^{n}} \prod\limits_{j=1}^{n+1} \chi_{F_{j}^{\ast}}(x_j) \chi_{F_{n+2}^{*}} (\sum\limits_{j=1}^{n+1} a_{j}x_{j} ) dx_{1} \dots  dx_{n+1},
\end{align*}
we also have
$$G(E_{1}, \dots, E_{n+2}) \leq G(\mathcal{S}_{e_1}(E_1), \dots, \mathcal{S}_{e_1}(E_{n+2})).  \eqno(2.36)$$
Hence by (2.14) together with the invariance of $G(E_{1}, \dots, E_{n+2})$ 
$$G(E_{1}, \dots, E_{n+2}) \leq G(\mathcal{S}_{u}(E_1), \dots, \mathcal{S}_{u}(E_{n+2})).  \eqno(2.37)$$

\medskip

Let $H$ be the semigroup of all finite products of $\mathcal{S}_{u}$'s.
Brascamp, Lieb and Luttinger \cite{BLL} proved for any bounded measurable $E \subset \mathbb{R}^{n}$,
there exists  $\{h_m\}_{m=0}^{\infty} \subset G$ such that $E_{m}:=h_m (E)$ converges to $E^{\ast}$ in symmetric difference.
That is, 
$$\lim\limits_{m \to \infty } |E_m \triangle E^{\ast}| =0,  \eqno(2.38)$$
where $\triangle$ denotes the symmetric difference of two sets.
Here we sketch the sequence of sets $\{E_m\}$.
Let $E_{0}=h_0 E=E$. Given $E_{m}$, choose unit vector $u_{1}$ such that
$$|\mathcal{S}_{u_1} (E_m)  \triangle E^{\ast}|  < \inf\limits_{u \in \mathbb{S}^{n-1}} |\mathcal{S}_{u} (E_m)  \triangle E^{\ast}| + \frac{1}{m}.$$
Hence we select  $u_2$, \dots, $u_n \in \mathbb{S}^{n-1}$  such that $\{u_1, \dots, u_n\}$ becomes  an orthonormal basis in $\mathbb{R}^{n}$,
and then construct 
$$E_{m+1}=h_{m+1} (E)= \mathcal{S}_{u_n} \mathcal{S}_{u_{n-1}} \dots  \mathcal{S}_{u_1}   (E_m).$$ 
The sequence of sets $\{E_m\}$ constructed above  converges to  $E^{\ast}$ in symmetric difference. See \cite{BLL} for the detailed proof.
Therefore,  we apply the convergence of Steiner symmetrisation together with (2.35) and (2.37) to  conclude
$$J(E_1, \dots, E_{n+1}) \leq J(E_{1}^{\ast}, \dots, E_{n+1}^{\ast}),$$
and
$$G(E_1, \dots, E_{n+2}) \leq G(E_{1}^{\ast}, \dots, E_{n+2}^{\ast}).  $$

Lastly, applying the layer cake representation for $f_j$ together with  Fubini's theorem  gives
$$J(f_1, \dots, f_{n+1})
=\int_{0}^{\infty} \dots \int_{0}^{\infty} 
J( \chi_{\{f_1 >t_1\}}, \dots, \chi_{\{f_{n+1} >t_{n+1}\}})dt_{1} \dots dt_{n+1}.$$
Since (2.32)-(2.33) hold for characteristic functions of sets of finite Lebesgue measure, 
for any $t_{j}$, $1\leq j \leq n+1$
$$J( \chi_{\{f_1 >t_1\}}, \dots, \chi_{\{f_{n+1} >t_{n+1}\}})
\leq J( \chi_{\{f_1 >t_1\}}^{\ast}, \dots, \chi_{\{f_{n+1} >t_{n+1}\}}^{\ast}).   \eqno(2.39)$$
Thus
\begin{align*}
J(f_1, \dots, f_{n+1}) &\leq \int_{0}^{\infty} \dots \int_{0}^{\infty} 
 J( \chi_{\{f_1 >t_1\}}^{\ast}, \dots, \chi_{\{f_{n+1} >t_{n+1}\}}^{\ast}) dt_{1} \dots dt_{n+1} \\
&= J(f_{1}^{\ast}, \dots, f_{n+1}^{\ast}).
\end{align*}
Similarly, 
$$G(f_1, \dots, f_{n+2}) \leq  G(f_{1}^{\ast}, \dots, f_{n+2}^{\ast}).$$
This completes Theorem 2.7.

\end{proof}

Let $f_j= \chi_{E_{j}}$, $1 \leq j \leq n$, and $f_{n+1}=\chi_{ (| \cdot | < \delta)}$. Theorem 2.7 gives
\begin{align*}
& \ \ \ \ \  |\{(y_{1}, \dots, y_{n})\in E_{1} \times \cdots \times E_{n}:
 \det( 0, y_{1}, \dots, y_{n})  < \delta \}|  \\
&\leq  | \{(y_{1}, \dots, y_{n})\in E_1^{\ast} \times \cdots \times E_n^{\ast} : \det(0, y_{1}, \dots, y_{n})  < \delta \}|.
\end{align*}
This  implies that  inequality (2.1) is extremised by balls centred at $y$, where $y\in \mathbb{R}^{n}$.

Let $f_{n+2}=| \cdot |^{-1}$, then Theorem 2.7 implies 
\begin{align*}
& \ \ \ \ \ \ \int_{(\mathbb{R}^{n})^{n+1}}  \prod_{j=1}^{n+1} f_{j}(y_{j}) \det(y_{1}, \dots, y_{n+1})^{-1}  dy_{1} \dots dy_{n+1} \\
&\leq 
\int_{(\mathbb{R}^{n})^{n+1}}  \prod_{j=1}^{n+1} f_{j}^{\ast}(y_{j}) \det(y_{1}, \dots, y_{n+1})^{-1}  dy_{1} \dots dy_{n+1}.
\end{align*}

\medskip

\section{ Matrix inequalities}

Now we turn to see the analogues of (1.5) and (1.6) replacing the  Euclidean space $\mathbb{R}^{n}$  by the space of $n \times n$ real matrices.
We remark that the proof of Theorem 3.1 mainly relies on the rearrangement inequality (2.6) and an invariance under the action of $O(n)$
by premultiplication  as described in the introduction.

\begin{theorem}
There exists a finite constant $\mathcal{C}_{n}$ such that  for any  measurable set $E_j \subset \mathfrak{M}^{n \times n}$ of finite measure, $j=1, \dots, n$,
$$ \prod\limits_{j=1}^{n}|E_j|^{\frac{1}{n^{2}}}  
\leq \mathcal{C}_{n} \sup\limits_{\substack{A_{j} \in E_{j}  \\  j=1, \dots, n}}  | \det(A_{1}+ \dots + A_{n} ) |, \eqno(3.1)$$
where $|\cdot |$ denotes the Lebesgue measure on Euclidean space $\mathbb{R}^{n^{2}}$ and
the absolute value on $\mathbb{R}$.

\end{theorem}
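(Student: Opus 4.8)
The plan is to reduce the matrix inequality (3.1) to the determinant inequality (2.27) over $\mathbb{R}^{n}$ by viewing each matrix as the tuple of its $n$ columns and using the multiplicative action of $\mathrm{O}(n)$ by premultiplication as a catalyst. Given measurable sets $E_{1},\dots,E_{n}\subset\mathfrak{M}^{n\times n}(\mathbb{R})$ of finite measure, first I would set $s:=\sup_{A_{j}\in E_{j}}|\det(A_{1}+\dots+A_{n})|$ and assume $s<\infty$, since otherwise there is nothing to prove. Write each $A_{j}\in E_{j}$ in terms of its columns, $A_{j}=(a_{j}^{1},\dots,a_{j}^{n})$ with $a_{j}^{k}\in\mathbb{R}^{n}$, so that $\det(A_{1}+\dots+A_{n})$ is (up to sign) $\det(0,\sum_{j}a_{j}^{1},\dots,\sum_{j}a_{j}^{n})$ — a determinant expression of exactly the type handled by Lemma 2.4 and Theorem 2.5, with $l=n^{2}$ variables arranged into $n$ blocks.

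The key step is a contradiction argument modelled on the proof of Theorem 2.5. Suppose (3.1) fails with constant $\mathcal{C}_{n}$ to be chosen; then there is a positive-measure set of tuples $(A_{1},\dots,A_{n})\in E_{1}\times\dots\times E_{n}$ for which $|\det(A_{1}+\dots+A_{n})|$ is large relative to $\prod_{j}|E_{j}|^{1/n^{2}}$. Fixing such a tuple, for each $j$ I would peel off the constraint on $A_{j}$ and define an enlarged set $\widetilde{E}_{j}\subset\mathfrak{M}^{n\times n}(\mathbb{R})$ of matrices $B$ satisfying the same determinant lower bound with $A_{j}$ replaced by $B$; by the distribution-function bookkeeping exactly as in Theorem 2.5 one gets $|\widetilde{E}_{j}|>$ (volume of a Euclidean ball of a suitable radius) — here the volume is in $\mathbb{R}^{n^{2}}$, so $|\widetilde{E}_{j}|>v_{n^{2}}r_{j}^{n^{2}}$ for an appropriate $r_{j}$ related to the size of $A_{j}$. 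Then $\widetilde{E}_{j}^{\ast}$ (symmetric rearrangement in $\mathbb{R}^{n^{2}}$) contains $A_{j}$. The point of premultiplication invariance is that for a ball $B(0,r)\subset\mathfrak{M}^{n\times n}(\mathbb{R})$ in the Euclidean $\mathbb{R}^{n^{2}}$ sense, $\mathrm{O}(n)$ acting by $B\mapsto\rho B$ preserves this ball and preserves $|\det|$; this lets me symmetrise the columns one Steiner direction at a time and then use rotations, invoking Lemma 2.4 (in the form with $l=n^{2}$) to bound $\sup$ over the rearranged sets by $\sup$ over the original sets, ultimately yielding $|\det(A_{1}+\dots+A_{n})|\le s$ for the fixed tuple, contradicting the choice of that tuple once $\mathcal{C}_{n}$ is taken large enough.

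The main obstacle I anticipate is matching the ball geometry in $\mathbb{R}^{n^{2}}$ with the column-wise Steiner symmetrisations used in Lemma 2.4: Lemma 2.4 symmetrises $n$-dimensional column vectors in $\mathbb{R}^{n}$, whereas the natural rearrangement of a subset of $\mathfrak{M}^{n\times n}(\mathbb{R})$ lives in $\mathbb{R}^{n^{2}}$, and the relevant determinant is a polynomial, not a linear form, in the full matrix entries. The resolution is to apply Lemma 2.4 fibrewise: fix all columns but the $k$-th of each matrix (so the determinant becomes a fixed linear combination of the remaining $n^{2}$ scalar variables — one scalar coordinate from each matrix in the varying direction), rearrange those scalars via Lemma 2.2/2.4, and iterate over the $n$ column slots and over directions; the $\mathrm{O}(n)$-invariance of $|\det(\cdot)|$ and of $n^{2}$-dimensional volume is what makes the iteration close up to a genuine ball in $\mathbb{R}^{n^{2}}$. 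Keeping track of constants through these iterations (each fibrewise step is lossless in the sup, and the volume comparison is the only place a dimensional constant enters) is routine once the scheme is set up, and gives an explicit, though not sharp, value of $\mathcal{C}_{n}$.
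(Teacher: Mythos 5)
Your proposal does not close the central difficulty of this theorem, and the step you offer in its place would fail. The obstruction is not ``matching ball geometry in $\mathbb{R}^{n^{2}}$ with column-wise symmetrisation'' in the sense you describe; it is that the $n$ columns of a single matrix $A_{j}$ are \emph{not independent} --- they are jointly constrained by $A_{j}\in E_{j}\subset\mathbb{R}^{n^{2}}$ --- so Lemma 2.4 with $l=n^{2}$ independent column sets in $\mathbb{R}^{n}$ simply does not apply, and the Theorem 2.5--style level-set argument produces enlarged sets $\widetilde{E}_{j}$ living in $\mathbb{R}^{n^{2}}$, for which you would then need a rearrangement inequality for $|\det|$ under symmetric rearrangement \emph{in $\mathbb{R}^{n^{2}}$}. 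No such inequality is available. The only symmetrisations compatible with $|\det|$ are Steiner symmetrisations in directions supported in a single column block (via Lemma 2.2), conjugated by the diagonal premultiplication action of $O(n)$; these generate at most invariance under the block-diagonal group, not under $O(n^{2})$, and $|\det|$ is not invariant under independent rotations of distinct columns. So the iteration does not ``close up to a genuine ball in $\mathbb{R}^{n^{2}}$'', and indeed the paper's Remark 3 and Example 3.2 show that balls are \emph{not} optimisers for the matrix inequalities --- a strong sign that the $\mathbb{R}^{n^{2}}$-rearrangement step you are relying on is not merely unproven but unavailable.

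The paper's actual proof circumvents this by a Fubini-slicing and pigeonhole argument that your proposal has no counterpart to. For each $E_{j}$ one slices off columns one at a time, choosing base points $x_{j0},x_{j1},\dots$ so that $|v(F_{j(n-2)})|\,\prod_{k}|F_{jk}^{x_{jk}}|\gtrsim_{n}|E_{j}|$, and then pigeonholes over permutations so that the $j$-th matrix contributes a \emph{large independent fibre in a distinct column slot} (a staircase structure: $E_{1}$ varies in its last column, $E_{2}$ in its second-to-last, and so on, down to $E_{n}$ contributing only its deepest base). Convexity of $\det$ in each column, John's ellipsoid, the $O(n)$ catalyst and Lemma 2.2 are used only to recenter these fibres, i.e.\ to remove the translations by the other, fixed columns so that the $n$ column ranges become a genuine product set in $(\mathbb{R}^{n})^{n}$. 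The quantitative engine is then Gressman's inequality (2.2) applied to these $n$ independent subsets of $\mathbb{R}^{n}$, which is what produces the exponent $1/n^{2}$ on each $|E_{j}|$. Without the slicing/pigeonhole step and without invoking (2.2), your scheme has no mechanism for converting measures of subsets of $\mathbb{R}^{n^{2}}$ into the product $\prod_{j}|E_{j}|^{1/n^{2}}$ on the left-hand side of (3.1).
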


\begin{proof}
Suppose  
$$\sup\limits_{\substack{A_{j} \in E_{j}  \\  j=1, \dots, n}}  | \det(A_{1}+ \dots + A_{n} ) |=s < \infty.$$
First we give some definition and notation. 
Let $F \subset  \mathfrak{M}^{n \times m}$,  define 
\begin{align*}
v(F)=\{
\left(\begin{array}{cccc}
   a_{11} &  a_{21}   &  \dots  &  a_{(m-1)1} \\
   \vdots &  \vdots    &   \   &   \vdots        \\
   a_{1n} &  a_{2n} &  \dots  &  a_{(m-1)n} 
 \end{array}
\right): \exists \
\left(\begin{array}{c}
   a_{m1} \\  \vdots  \\ a_{mn}
 \end{array} \right)
 \ \mathrm{such \  that} \
\left(\begin{array}{ccc}
  a_{11} &  \dots & a_{m1} \\
  \vdots &  \   & \vdots \\
  a_{1n} &  \dots & a_{mn}
 \end{array} \right)
 \in F \},
\end{align*}
so $ v(F) \subset \mathfrak{M}^{n \times (m-1)}$.
For any $n$-by-$(m-1)$ matrix 
$$x=\left(\begin{array}{cccc}
   a_{11} &  a_{21}   &  \dots  &  a_{(m-1)1} \\
   \vdots &  \vdots    &   \   &   \vdots        \\
   a_{1n} &  a_{2n} &  \dots  &  a_{(m-1)n} 
 \end{array}
\right) \in v(F),$$ 
we denote
\begin{align*}
F^{x}=\{\left(\begin{array}{c}
   a_{m1} \\  \vdots  \\ a_{mn}
 \end{array} \right): \
 \left(\begin{array}{ccc}
  a_{11} &  \dots & a_{m1} \\
  \vdots &  \   & \vdots \\
  a_{1n} &  \dots & a_{mn}
 \end{array} \right)
 \in F \} \subset \mathfrak{M}^{n \times 1}.
\end{align*}

Let $E \subset \mathfrak{M}^{n \times n}$.
For any rotation around the origin $T$ in $\mathbb{R}^{n}$, consider
$$\Phi_{T} : A \mapsto TA, \  \forall \  A \in E,$$
where $T$ is a $n$-by-$n$ matrix with $\det(T)=1$.
Note that $\Phi_{T}$ does not change $|E|$ and $\sup\limits_{A \in E} |\det(A)|$. This is because
$$\sup\limits_{A \in \Phi_{T}(E)} |\det(A)|= \sup\limits_{A \in E}  |\det(TA)|= \sup\limits_{A \in E} |\det(A)|.  \eqno (3.2)$$
Besides, if we see the matrix 
$A= \left(\begin{array}{ccc}
  a_{11} &  \dots & a_{n1} \\
  \vdots &  \   & \vdots \\
  a_{1n} &  \dots & a_{nn}
 \end{array} \right) \in E$ as a vector 
$$(a_{11}, \dots, a_{1n}, a_{21}, \dots, a_{2n}, \dots, a_{n1}, \dots, a_{nn}) \in \mathbb{R}^{n^{2}},$$
then the matrix  $\Phi_{T}(A)$  becomes
\begin{align*}
 \left(\begin{array}{cccc}
 T &  \  & \  & \     \\
  \  &  T  & \  & \    \\
 \  &   \  & \ddots  & \  \\  
  \  &  \  &  \  &  T
 \end{array} \right)
\left(\begin{array}{c}
   a_{11} \\  \vdots  \\ a_{nn}
 \end{array} \right).
\end{align*}
Thus 
$$|\Phi_{T}(E)|=|T|^{n}|E|=|E|.  \eqno (3.3)$$
From $|E|=\int_{v(E)}|E^{x}|dx$ it follows that
there always exists $\overline{x} \in v(E)$ such that
$$|v(E)| |E^{\overline{x}}| \gtrsim_n  |E|. \eqno (3.4)$$

By John Ellipsoid,  for any compact convex $G\subset \mathbb{R}^{n}$ there exists an ellipsoid  $G^{\prime} \subset G$ such that
$$|G^{\prime}|  \gtrsim_n  |G|.   \eqno (3.5) $$
For the John ellipsoid $G^{\prime}$, we choose  
a rotation $T \in O(n)$  such that $T G^{\prime}$ is an ellipsoid with principal axes parallel to the coordinate axes.
As well known, for every ellipsoid $T G^{\prime}$ with principal  axes parallel to the coordinate axes,
there exists an axis-parallel rectangle $H \subset T G^{\prime} $ such that 
$$|H|  \gtrsim_n |T G^{\prime}|.   \eqno (3.6) $$
Hence if $E^{\overline{x}}$ is convex,  from (3.5)-(3.6) we may assume that there exists $T \in O(n)$ such that $E^{\overline{x}}$
is an axis-parallel rectangle in $\mathbb{R}^{n}$.

\medskip

Take $n=2$.
By (3.4) there exists $x_{10}\in v(E_{1}) \subset  \mathfrak{M}^{2 \times 1}$, $x_{20}\in v(E_{2}) \subset  \mathfrak{M}^{2 \times 1}$ such that 
$$|v(E_{1})| |E_{1}^{x_{10}}| \gtrsim  |E_1|,  \ \  |v(E_{2})| |E_{2}^{x_{20}}| \gtrsim  |E_2|.   \eqno (3.7)$$
Then 
$$\max \{|v(E_{2})| |E_{1}^{x_{10}}|, |v(E_{1})| |E_{2}^{x_{20}}| \} \gtrsim (|E_1||E_2|)^{1/2}.$$
For simplicity, suppose 
$$|v(E_{2})| |E_{1}^{x_{10}}| \gtrsim (|E_1||E_2|)^{1/2}.  \eqno (3.8)$$

To study the suprema, we consider $2$-by-$2$ matrix
$$\overline{A}_{1}:=
\left(
\begin{array}{cc}
(x_{10})_1  &    (x_{10})_2   \\
\end{array}
\right) \in E_{1}$$
with 
\begin{center}
$(x_{10})_1   =x_{10} \in \mathfrak{M}^{n \times 1}$
 and  \ 
$(x_{10})_2 \in E_{1}^{x_{10}}$.
\end{center}
 For any 
$\overline{A}_{2}:=
\left(
\begin{array}{cc}
x_1 &    x_2  \\
\end{array}
\right) \in E_{2}$, 
 for any constructed $\overline{A}_{1}$ above
\begin{align*}
s &\geq | \det (\overline{A}_{1} +  \overline{A}_{2}) | \\
&=|  \det
\left(
\begin{array}{ccc}
x_1+ (x_{10})_1 &  x_2+ (x_{10})_2   \\
\end{array}
\right)|.
\end{align*}
So  fix the first column, we have for any $x_1 \in v(E_2)$, $x_2 \in E_{2}^{x_1}$
$$s \geq   \sup\limits_{(x_{10})_{2} \in E_{1}^{x_{10}}}
|  \det
\left(
\begin{array}{cc}
x_1+ (x_{10})_1 &  x_2+ (x_{10})_2   \\
\end{array}
\right)|. \eqno (3.9)$$
Because fix  all the columns except one,  the $|\det|$ function is convex function of the remaining column.
Thus
$$s \geq   \sup\limits_{(x_{10})_{2} \in \mathrm{co}E_{1}^{x_{10}}}
|  \det
\left(
\begin{array}{cc}
x_1+ (x_{10})_1 &  x_2+ (x_{10})_2   \\
\end{array}
\right)|. \eqno (3.10)$$
 By (3.5) we may assume $\mathrm{co}E_{1}^{x_{10}}$ is an ellipsoid  in $\mathbb{R}^{2}$.
Choose a rotation $T_{0} \in O(2)$  such that  $T_{0} \mathrm{co}E_{1}^{x_{10}}$ is an ellipsoid with principal axes parallel to the coordinate axes.
From (3.6) we  may assume  $T_{0} \mathrm{co}E_{1}^{x_{10}}$  is an axis-parallel rectangle.
Note that (3.10) is invariant under $O(2)$ as discussed in (3.2), so 
$$\begin{array}{ll}
s &\geq  \sup\limits_{(x_{10})_{2} \in \mathrm{co}E_{1}^{x_{10}}}
|  \det
\left(
\begin{array}{cc}
x_1+ (x_{10})_1 &  x_2+ (x_{10})_2   \\
\end{array}
\right)|   \\
&=\sup\limits_{(x_{10})_{2} \in \mathrm{co}E_{1}^{x_{10}}}
|  \det
\left(
\begin{array}{cc}
T_{0} x_1+ T_0 (x_{10})_1 &  T_0 x_2+ T_0 (x_{10})_2   \\
\end{array}
\right)|. 
\end{array}\eqno(3.11)$$
Since $T_{0} \mathrm{co}E_{1}^{x_{10}}$   is an axis-parallel rectangle  in $\mathbb{R}^{2}$,
it can be written as $A_1 \times A_2$, where $A_1, A_2$ are intervals in  $\mathbb{R}$, and then
$$\mathcal{S}(T_{0} \mathrm{co}E_{1}^{x_{10}})=\mathcal{S}(T_{0} \mathrm{co}E_{1}^{x_{10}}+ T_0 x_2 )=A_{1}^{\ast} \times A_{2}^{\ast},  \  \forall \ x_2\in E_{2}^{x_1}.$$
Similar to the proof of  (2.10),
applying (2.6) gives for any $x_1 \in v(E_{2})$
$$s \geq   \sup\limits_{(x_{10})_{2} \in \mathcal{S} (T_0 \mathrm{co}E_{1}^{x_{10}})}
|  \det
\left(
\begin{array}{cc}
T_{0} x_1+ T_0 (x_{10})_1 & (x_{10})_{2}   \\
\end{array}
\right)|. \eqno (3.12)$$
Therefore, by (2.2) we deduce that
$$s \geq C |T_{0} v(E_{2})+ T_0 (x_{10})_1 |^{1/2} |\mathcal{S} (T_0 \mathrm{co}E_{1}^{x_{10}})|^{1/2}
= C |v(E_{2})|^{1/2} |\mathrm{co}E_{1}^{x_{10}}|^{1/2}.$$
This together with (3.8) implies
$$s  \geq C |v(E_{2})|^{1/2} |\mathrm{co}E_{1}^{x_{10}}|^{1/2} \geq C  |v(E_{2})|^{1/2} |E_{1}^{x_{10}}|^{1/2}  \geq C(|E_1||E_2|)^{1/2},$$
which completes (3.1) for $n=2$.

\medskip

Take $n=3$. By (3.4) for each $E_{j}$ there exists $x_{j0} \in v(E_{j}) \subset \mathfrak{M}^{3 \times 2}$ such that
$$|v(E_{j})| |E_{j}^{x_{j0}}| \gtrsim  |E_j|, 1 \leq j \leq 3.   \eqno (3.13)$$
Denote $F_{j}=v(E_{j}) \subset \mathfrak{M}^{3 \times 2}$,
there exists fixed $x_{j1} \in v(F_{j}) \subset \mathfrak{M}^{3 \times 1}$ such that
$$|v(F_{j})| |F_{j}^{x_{j1}}| \gtrsim |F_{j}| =v(E_{j}).  \eqno (3.14)$$
From (3.13)-(3.14), we have $x_{j0} \in v(E_{j}) , x_{j1} \in v(F_{j})$
$$|v(F_{j})|  |F_{j}^{x_{j1}}| |E_{j}^{x_{j0}}| \gtrsim  |E_j|,  1 \leq j \leq 3. \eqno (3.15)$$
It is not hard to see there exists $\{i_1, i_2, i_3\}$ with $i_1 \neq i_2 \neq i_3$  such that
$$(v(F_{i_3})|  |F_{i_2}^{x_{i_2 1}}|  |E_{i_1}^{x_{i_1 0}}| )^{3}
\geq   \prod\limits_{j=1}^{3}   (|v(F_{j})|  |F_{j}^{x_{j1}}| |E_{j}^{x_{j0}}|)
\gtrsim   \prod\limits_{j=1}^{3} |E_j|.  \eqno (3.16)$$
For simplicity, suppose 
$$|v(F_{3})| |F_{2}^{x_{21}}|  |E_{1}^{x_{10}}| \gtrsim (|E_1||E_2||E_3|)^{1/3}.  \eqno (3.17)$$

Now we consider $3$-by-$3$ matrices
$$\overline{A}_{1}:=
\left(
\begin{array}{ccc}
(x_{10})_1  &    (x_{10})_2  &   (x_{10})_3   \\
\end{array}
\right) \in E_{1}$$
with 
$\left(
\begin{array}{cc}
(x_{10})_1   & (x_{10})_2   \\
\end{array}
\right) =x_{10} \in \mathfrak{M}^{3 \times 2}$ and $(x_{10})_3 \in E_{1}^{x_{10}}$;
$$\overline{A}_{2}:=
\left(
\begin{array}{ccc}
(x_{21})_1  &    (x_{21})_2   &   (x_{21})_3  \\
\end{array}
\right) \in E_{2}$$
with the condition 
\begin{center}
$(x_{21})_1   =x_{21} \in \mathfrak{M}^{3 \times 1}$
 and  \ 
$(x_{21})_2 \in F_{2}^{x_{21}}$.
\end{center}
For any 
$\overline{A}_{3}:=
\left(
\begin{array}{ccc}
x_1 &    x_2   &    x_3 \\
\end{array}
\right) \in E_{3}$, 
for any constructed $\overline{A}_{1}, \overline{A}_{2}$ above,
\begin{align*}
s &\geq | \det (\overline{A}_{1} +  \overline{A}_{2}+  \overline{A}_{3}  )   | \\
&=|  \det
\left(
\begin{array}{ccc}
x_1+ (x_{10})_1+  (x_{21})_1  &  x_2+ (x_{10})_2  + (x_{21})_2   &  x_3+ (x_{10})_3 + (x_{21})_3 \\
\end{array}
\right)|.
\end{align*}
So  fix all columns except the $3$rd column, we have
$$s \geq   \sup\limits_{(x_{10})_{3} \in E_{1}^{x_{10}}}
|  \det
\left(
\begin{array}{ccc}
x_1+ (x_{10})_1+  (x_{21})_1  &  x_2+ (x_{10})_2  + (x_{21})_2   &  x_3+ (x_{10})_3 + (x_{21})_3 \\
\end{array}
\right)|.$$
Obviously,
$$s \geq   \sup\limits_{(x_{10})_{3} \in \mathrm{co}E_{1}^{x_{10}}}
|  \det
\left(
\begin{array}{ccc}
x_1+ (x_{10})_1+  (x_{21})_1  &  x_2+ (x_{10})_2  + (x_{21})_2   &  x_3+ (x_{10})_3 + (x_{21})_3 \\
\end{array}
\right)|. $$
As before, by (3.5)  we assume there exists 
$T_{0} \mathrm{co}E_{1}^{x_{10}}$ is an ellipsoid with principal axes parallel to the coordinate axes in $\mathbb{R}^{3}$.
From (3.6) we  may assume  $T_{0} \mathrm{co}E_{1}^{x_{10}}$  is an axis-parallel rectangle.
Because of the  invariance under $O(3)$,
\begin{align*}
s &\geq   
 \sup\limits_{(x_{10})_{3} \in \mathrm{co}E_{1}^{x_{10}}}
|  \det
\left(
\begin{array}{ccc}
x_1+ (x_{10})_1+  (x_{21})_1  &  x_2+ (x_{10})_2  + (x_{21})_2   &  x_3+ (x_{10})_3 + (x_{21})_3 \\
\end{array}
\right)| \\
&=\sup\limits_{(x_{10})_{3} \in \mathrm{co}E_{1}^{x_{10}}}
|  \det
\left(
\begin{array}{ccc}
T_{0}(x_1+ (x_{10})_1+  (x_{21})_1)  & T_{0}( x_2+ (x_{10})_2  + (x_{21})_2 )  &  T_{0} (x_3+ (x_{10})_3 + (x_{21})_3) \\
\end{array}
\right)|. 
\end{align*}
Since $T_{0} \mathrm{co}E_{1}^{x_{10}}$   is an axis-parallel rectangle  in $\mathbb{R}^{3}$,
it can be written as $A_1 \times A_2 \times A_3$, where $A_1, A_2, A_3$ are intervals in  $\mathbb{R}$.
Similar to the proof of  (2.10) together with 
$$\mathcal{S}(T_{0} \mathrm{co}E_{1}^{x_{10}})=\mathcal{S}(T_{0} \mathrm{co}E_{1}^{x_{10}}+h)
=A_{1}^{\ast} \times A_{2}^{\ast} \times A_{3}^{\ast}, \ \forall \ h \in \mathbb{R}^{3},$$
applying (2.6) gives  for any
$\left(
\begin{array}{cc}
x_1 &    x_2    \\
\end{array}
\right) \in v(E_{3})$, 
$$s \geq   \sup\limits_{(x_{10})_{3} \in \mathcal{S} (T_0 \mathrm{co}E_{1}^{x_{10}})}
|  \det
\left(
\begin{array}{ccc}
T_{0}(x_1+ (x_{10})_1+  (x_{21})_1)  & T_{0}( x_2+ (x_{10})_2  + (x_{21})_2 )  &  (x_{10})_{3}  \\
\end{array}
\right)|. $$

Then fix all columns except the $2$nd column, 
$$s \geq   \sup\limits_{(x_{21})_{2} \in F_{2}^{x_{21}}}
|  \det
\left(
\begin{array}{ccc}
T_{0}(x_1+ (x_{10})_1+  (x_{21})_1)  & T_{0}( x_2+ (x_{10})_2  + (x_{21})_2 )  &  (x_{10})_{3}  \\
\end{array}
\right)| $$
holds for any $(x_{10})_{3} \in \mathcal{S} (T_0 \mathrm{co}E_{1}^{x_{10}})$.
Similarly, by the convex property of $|\det|$ function when fixing other columns
$$s \geq   \sup\limits_{(x_{21})_{2} \in  \mathrm{co}F_{2}^{x_{21}}}
|  \det
\left(
\begin{array}{ccc}
T_{0}(x_1+ (x_{10})_1+  (x_{21})_1)  & T_{0}( x_2+ (x_{10})_2  + (x_{21})_2 )  &  (x_{10})_{3}  \\
\end{array}
\right)|.$$
 By (3.5) we may assume $T_0 \mathrm{co}F_{2}^{x_{21}}$ is an ellipsoid  in $\mathbb{R}^{3}$.
Choose a rotation $T_{1} \in O(3)$  such that  $T_1 T_0 \mathrm{co}F_{2}^{x_{21}}$ is an ellipsoid with principal axes parallel to the coordinate axes.
From (3.6) we  may assume  $T_1 T_0 \mathrm{co}F_{2}^{x_{21}}$  is an axis-parallel rectangle.
By the  invariance of $O(3)$, 
\begin{align*}
s &\geq   \sup\limits_{(x_{21})_{2} \in  \mathrm{co}F_{2}^{x_{21}}}
|  \det
\left(
\begin{array}{ccc}
T_{0}(x_1+ (x_{10})_1+  (x_{21})_1)  & T_{0}( x_2+ (x_{10})_2  + (x_{21})_2 )  &  (x_{10})_{3}  \\
\end{array}
\right)| \\
&= \sup\limits_{(x_{21})_{2} \in  \mathrm{co}F_{2}^{x_{21}}}
|  \det
\left(
\begin{array}{ccc}
T_1 T_{0}(x_1+ (x_{10})_1+  (x_{21})_1)  & T_1 T_{0}( x_2+ (x_{10})_2  + (x_{21})_2 )  &  T_1 (x_{10})_{3}  \\
\end{array}
\right)|. 
\end{align*}
Since $T_1 T_0 \mathrm{co}F_{2}^{x_{21}}$  is an axis-parallel rectangle, 
together with
$$\mathcal{S}(T_1 T_{0} \mathrm{co}F_{2}^{x_{21}})=\mathcal{S}(T_1 T_{0} \mathrm{co}F_{2}^{x_{21}}+h), \ 
 \ \forall \ h \in \mathbb{R}^{3}$$
apply inequality (2.6) again to obtain
$$s \geq   \sup\limits_{\substack{  (x_{10})_{3} \in \mathcal{S} (T_0 \mathrm{co}E_{1}^{x_{10}})   \\    (x_{21})_{2} \in \mathcal{S} (T_1 T_0 \mathrm{co}F_{2}^{x_{21}})}}
|  \det
\left(
\begin{array}{ccc}
T_1 T_{0}(x_1+ (x_{10})_1+  (x_{21})_1)  &    (x_{21})_2   &  T_1 (x_{10})_{3}  \\
\end{array}
\right)| $$
holds for any $x_1 \in v(F_3) \subset  \mathfrak{M}^{3 \times 1}$. 

Lastly, applying (2.2) we conclude
\begin{align*}
s &\geq C |T_1 T_{0} v(F_3)+T_1 T_{0} (x_{10})_1+T_1 T_{0}  (x_{21})_1|^{1/3}  |  \mathcal{S} (T_1 T_0 \mathrm{co}F_{2}^{x_{21}})|^{1/3}  |T_1  \mathcal{S} (T_0 \mathrm{co}E_{1}^{x_{10}}) |^{1/3} \\
&= C  |v(F_3)|^{1/3} |\mathrm{co} F_{2}^{x_{21}})|^{1/3}   | \mathrm{co}E_{1}^{x_{10}}) |^{1/3}.
\end{align*}
This together with (3.17) implies
$$s  \geq C |v(F_{3})|^{1/3} |\mathrm{co}F_{2}^{x_{21}}|^{1/3} |\mathrm{co}E_{1}^{x_{10}}|^{1/3}   
\geq C |v(F_{3})|^{1/3} |F_{2}^{x_{21}}|^{1/3} |E_{1}^{x_{10}}|^{1/3}   
\geq C(|E_1||E_2||E_3|)^{1/3}.$$
This  completes (3.1) for $n=3$.

\medskip

For the general $n$,
for each $E_{j}$, denote $F_{j0}= E_{j}$, $1 \leq j \leq n$. 
Given   $1 \leq k \leq n-2$, let 
$$F_{jk}= v(F_{j(k-1)})  \subset  \mathfrak{M}^{n \times (n-k)},  $$
then by (3.4)  there exists fixed $x_{jk}\in v(F_{jk}) \subset  \mathfrak{M}^{n \times (n-k-1)}$, $0 \leq k \leq n-2$, such that
$$|v(F_{jk})| |F_{jk}^{x_{jk}}| \gtrsim  |F_{jk}|=|v(F_{j(k-1)})|.  \eqno (3.18)$$
That is,  for each $E_j$ there exist  $\{x_{j0}, \dots, x_{j(n-2)}\}$ such that  for each $k=0, \dots, n-2$
$$x_{jk} \in v(F_{jk}) \subset  \mathfrak{M}^{n \times (n-k-1)}, $$ 
and
$$|v(F_{j(n-2)})| |F_{j(n-2)}^{x_{j(n-2)}}|   |F_{j(n-3)}^{x_{j(n-3)}}|  \dots  |F_{j1}^{x_{j1}}| |F_{j0}^{x_{j0}}| \gtrsim_{n}  |E_j|.  \eqno (3.19)$$
It is not hard to see
there exist $\{i_j\}_{j=1}^{n}$  with $1 \leq i_{j} \leq n$  and $i_{j} \neq i_{k}$ for $j \neq k$
such that
\begin{align*}
& \ \ \ \  \  (|v(F_{i_{n}(n-2)})| |F_{i_{n-1}(n-2)}^{x_{i_{n-1} (n-2)}}|   |F_{i_{n-2} (n-3)}^{x_{i_{n-2} (n-3)}}|  \dots  |F_{i_{2}1}^{x_{i_{2} 1}}| |F_{i_{1}0}^{x_{i_{1} 0}}|)^{n} \\
& \geq  \prod\limits_{j=1}^{n} (|v(F_{j(n-2)})| |F_{j(n-2)}^{x_{j(n-2)}}|   |F_{j(n-3)}^{x_{j(n-3)}}|  \dots  |F_{j1}^{x_{j1}}| |F_{j0}^{x_{j0}}|)
\gtrsim_{n} \prod\limits_{j=1}^{n}|E_j|.
\end{align*}
For simplicity, denote $i_j=j, 1 \leq j \leq n$. That is,
 $$|v(F_{n(n-2)})| |F_{(n-1)(n-2)}^{x_{(n-1) (n-2)}}|   |F_{(n-2) (n-3)}^{x_{(n-2) (n-3)}}|  \dots  |F_{2 1}^{x_{2 1}}| |F_{10}^{x_{1 0}}| 
\gtrsim_{n} \prod\limits_{j=1}^{n}|E_j|^{1/n}.  \eqno(3.20)$$
To study the suprema, we consider the following $n$-by-$n$ matrices
$$\overline{A}_{1}:=
\left(
\begin{array}{ccc}
(x_{10})_1  &  \dots  &   (x_{10})_n   \\
\end{array}
\right) \in E_{1}$$
with 
$\left(
\begin{array}{ccc}
(x_{10})_1  &  \dots  &   (x_{10})_{(n-1)}   \\
\end{array}
\right)=x_{10} \in \mathfrak{M}^{n \times (n-1)}$
 and 
$(x_{10})_n \in F_{10}^{x_{10}}$;
$$\overline{A}_{2}:=
\left(
\begin{array}{ccc}
(x_{21})_1  &  \dots  &   (x_{21})_n  \\
\end{array}
\right) \in E_{2}$$
with 
$\left(
\begin{array}{ccc}
(x_{21})_1  &  \dots  & (x_{21})_{(n-2)} \\
\end{array}
\right)=x_{21} \in \mathfrak{M}^{n \times (n-2)}$
and
$(x_{21})_{n-1}  \in F_{21}^{x_{21}}$.
That  is,  construct $\{\overline{A}_{1}, \dots, \overline{A}_{n-1}\}$ such that  for each $1 \leq k \leq n-1$
$$\overline{A}_{k}:=
\left(
\begin{array}{ccc}
(x_{k(k-1)})_1 &  \dots  &   (x_{k(k-1)})_n  \\
\end{array}
\right) \in E_{k}, $$
with the condition  that 
$$\left(
\begin{array}{ccc}
x_{k(k-1)})_1 &  \dots  &  (x_{k(k-1)})_{n-k}  \\
\end{array}
\right) =x_{k(k-1)} \in \mathfrak{M}^{n \times (n-k)}, \ \ 
(x_{k(k-1)})_{n-k+1} \in F_{k(k-1)}^{x_{k(k-1)}}.$$
For any $\overline{A}_{n}:=
\left(
\begin{array}{ccc}
x_1 &  \dots  &  x_n  \\
\end{array}
\right) \in E_{n}$, 
 for any constructed $\overline{A}_{1}, \dots, \overline{A}_{n-1}$ above,
\begin{align*}
s &\geq | \det (\overline{A}_{1} + \dots +  \overline{A}_{n-1}+ \overline{A}_{n}) | \\
&=|  \det
\left(
\begin{array}{ccc}
x_1+\sum\limits_{k=1}^{n-1}  (x_{k(k-1)})_1 &  \dots  &  x_n+\sum\limits_{k=1}^{n-1}  (x_{k(k-1)})_n   \\
\end{array}
\right)|.
\end{align*}
Taking the same arguments as in the case $n=3$,  there exist $T_0, T_1 \in O(n)$
$$s \geq \sup\limits_{\substack{  (x_{10})_n \in  \mathcal{S}(T_0 \mathrm{co}F_{10}^{x_{10}})    \\   (x_{21})_{(n-1)} \in  \mathcal{S}(T_1 T_0 \mathrm{co}F_{21}^{x_{21}} )}}
| \det
\left(
\begin{array}{cc}
B &  B^{\prime} \\
\end{array}
\right)|,  \eqno(3.21)$$
where 
$$B =  T_1 T_0
\left(
\begin{array}{ccc}
x_1+\sum\limits_{k=1}^{n-1}  (x_{k(k-1)})_1   &  \dots  &  x_{n-2}+\sum\limits_{k=1}^{n-1}  (x_{k(k-1)})_{n-2}   \\
\end{array}
\right) \in \mathfrak{M}^{n \times (n-2)},$$
$$B^{\prime} = 
\left(
\begin{array}{cc}
 (x_{21})_{(n-1)}   &   T_1 (x_{10})_n    \\
\end{array}
\right) \in \mathfrak{M}^{n \times 2}.$$
Applying the same arguments again to (3.21),  there exist $T_2 \in O(n)$ 
$$s \geq \sup\limits_{\substack{  (x_{10})_n \in  \mathcal{S}(T_0 \mathrm{co}F_{10}^{x_{10}})    \\   (x_{21})_{(n-1)} \in  \mathcal{S}(T_1 T_0 \mathrm{co}F_{21}^{x_{21}} )
\\ (x_{32})_{n-2} \in  \mathcal{S}(T_2 T_1 T_0 \mathrm{co}F_{32}^{x_{32}}) }}
| \det
\left(
\begin{array}{cc}
C &  C^{\prime} \\
\end{array}
\right)|,  \eqno(3.22)$$
where 
$$C =  T_2 T_1 T_0
\left(
\begin{array}{ccc}
x_1+\sum\limits_{k=1}^{n-1}  (x_{k(k-1)})_1   &  \dots  &  x_{n-3}+\sum\limits_{k=1}^{n-1}  (x_{k(k-1)})_{n-3}   \\
\end{array}
\right) \in \mathfrak{M}^{n \times (n-3)},$$
$$C^{\prime} = 
\left(
\begin{array}{ccc}
(x_{32})_{(n-2)}   &   T_2 (x_{21})_{(n-1)}   &   T_2 T_1 (x_{10})_n    \\
\end{array}
\right) \in \mathfrak{M}^{n \times 3}.$$
Keep repeating the  same arguments above and  finally we have there exists $T_0, \dots, T_{n-2} \in O(n)$,  
such that  for any $x_1 \in v(F_{n(n-2)}) \subset \mathfrak{M}^{1}$
$$s \geq \sup\limits_{\substack{  (x_{10})_n \in  \mathcal{S}(T_0 \mathrm{co}F_{10}^{x_{10}})    \\   (x_{21})_{(n-1)} \in  \mathcal{S}(T_1 T_0 \mathrm{co}F_{21}^{x_{21}} )
\\ \dots \dots \\
(x_{(n-1)(n-2)})_{2} \in  \mathcal{S}(T_{n-2}  T_{n-3} \dots T_0 \mathrm{co}F_{(n-1)(n-2)}^{x_{(n-1)(n-2)}} )}}
| \det
\left(
\begin{array}{cc}
D &  D^{\prime} \\
\end{array}
\right)|,  \eqno(3.23)$$
where $D \in  \mathfrak{M}^{n \times 1}$, $D^{\prime} \in \mathfrak{M}^{n \times (n-1)}$:
$$D= (T_{n-2} \dots T_0) (x_1 + \sum\limits_{k=1}^{n-1}   (x_{k(k-1)})_1),$$
$$D^{\prime}=\left(
\begin{array}{cccccc}
 (x_{(n-1)(n-2)})_2  &   T_{n-2}(x_{(n-2)(n-3)})_3  &     (T_{n-2} T_{n-3}) (x_{(n-3)(n-4)})_4    &  \dots &   (T_{n-2} \dots T_1) (x_{10})_n  \\
\end{array}
\right).$$

It follows from  (2.2) together with the invariance under $O(n)$ that
$$s\geq C |v(F_{n(n-2)})|^{1/n}  |\mathrm{co}F_{(n-1)(n-2)}^{x_{n-1)(n-2)}}|^{1/n} 
| \mathrm{co}F_{(n-2)(n-3)}^{x_{(n-2)(n-3)}}|^{1/n}  \dots  |\mathrm{co}F_{21}^{x_{21}}|^{1/n} |\mathrm{co}F_{10}^{x_{10}}|^{1/n}.$$ 
Obviously, 
$$|\mathrm{co}F_{k(k-1)}^{x_{k(k-1)}}| \geq  |F_{k(k-1)}^{x_{k(k-1)}}|, \  1 \leq k \leq n-1.$$
This together with (3.20) implies
\begin{align*}
s &\geq C(|v(F_{n(n-2)})| | \mathrm{co}F_{(n-1)(n-2)}^{x_{(n-1) (n-2)}}|   | \mathrm{co}F_{(n-2) (n-3)}^{x_{(n-2) (n-3)}}|
\dots  | \mathrm{co}F_{2 1}^{x_{2 1}}| |F_{10}^{x_{1 0}}|)^{1/n}  \\
&\geq C (|v(F_{n(n-2)})| |F_{(n-1)(n-2)}^{x_{(n-1) (n-2)}}|   |F_{(n-2) (n-3)}^{x_{(n-2) (n-3)}}|  \dots  |F_{2 1}^{x_{2 1}}| |F_{10}^{x_{1 0}}|  )^{1/n}
\geq C \prod\limits_{j=1}^{n}|E_j|^{\frac{1}{n^{2}}}.  
\end{align*}
This completes Theorem 3.1.

\end{proof}

\medskip

\begin{corollary}
There exists a finite constant $\mathcal{A}_{n}, \mathcal{B}_{n}$ such that  for any  measurable set $E \subset \mathfrak{M}^{n \times n}$ of finite measure,
for any non-zero scalar $\lambda_{j} \in \mathbb{R}$, $j=1, \dots, n$,
$$(\prod_{j=1}^{n} |\lambda_{j}|) |E|^{\frac{1}{n}}  
\leq \mathcal{A}_{n} \displaystyle{\sup_{\substack{A_{j} \in E  \\  j=1, \dots, n}}}  \  | \det(\lambda_{1} A_{1}+ \dots + \lambda_{n} A_{n} )  |.  \eqno (3.24)$$
If $E$ is a compact convex set in $\mathfrak{M}^{n \times n}$, then 
$$|E|^{\frac{1}{n}}  \leq \mathcal{B}_{n} \displaystyle{\sup_{A \in E}}  \  | \det(A)  |.  \eqno (3.25)  $$

\end{corollary}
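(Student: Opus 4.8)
The plan is to obtain both estimates as direct specialisations of Theorem 3.1, so that the work is essentially bookkeeping with scaling exponents rather than new analysis.

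For (3.24) I would apply Theorem 3.1 to the dilated sets $E_{j}:=\lambda_{j}E$, $j=1,\dots,n$, which are measurable of finite measure since $E$ is. Identifying $\mathfrak{M}^{n\times n}(\mathbb{R})$ with $\mathbb{R}^{n^{2}}$, dilation by $\lambda_{j}$ multiplies the $n^{2}$-dimensional Lebesgue measure by $|\lambda_{j}|^{n^{2}}$, hence
$$\prod_{j=1}^{n}|E_{j}|^{\frac{1}{n^{2}}}=\prod_{j=1}^{n}\big(|\lambda_{j}|^{n^{2}}|E|\big)^{\frac{1}{n^{2}}}=\Big(\prod_{j=1}^{n}|\lambda_{j}|\Big)\,|E|^{\frac{1}{n}}.$$
On the supremum side, writing $A_{j}=\lambda_{j}B_{j}$ with $B_{j}\in E$ gives $A_{1}+\dots+A_{n}=\lambda_{1}B_{1}+\dots+\lambda_{n}B_{n}$, so the right-hand side of Theorem 3.1 for these sets is exactly $\sup_{B_{j}\in E}|\det(\lambda_{1}B_{1}+\dots+\lambda_{n}B_{n})|$. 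This yields (3.24) with $\mathcal{A}_{n}=\mathcal{C}_{n}$.

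For (3.25) I would apply (3.24) with $\lambda_{1}=\dots=\lambda_{n}=1$ and $E_{1}=\dots=E_{n}=E$, obtaining $|E|^{\frac{1}{n}}\le\mathcal{A}_{n}\sup_{A_{j}\in E}|\det(A_{1}+\dots+A_{n})|$, and then use the convexity of $E$: for any $A_{1},\dots,A_{n}\in E$ the average $\bar{A}:=\frac{1}{n}(A_{1}+\dots+A_{n})$ lies in $E$, so $|\det(A_{1}+\dots+A_{n})|=n^{n}|\det\bar{A}|\le n^{n}\sup_{A\in E}|\det A|$. Combining the two estimates gives (3.25) with $\mathcal{B}_{n}=n^{n}\mathcal{A}_{n}=n^{n}\mathcal{C}_{n}$.

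The argument presents no real obstacle beyond keeping track of the exponents; the only substantive point is the last step for (3.25), where passing from $\sup_{A_{j}\in E}|\det(\sum_{j}A_{j})|$ back to $\sup_{A\in E}|\det A|$ genuinely requires $E$ to be convex. As the counterexample in Remarks 2 shows, (3.25) is false without this hypothesis, so the convexity cannot be removed — this is exactly the place in the proof where it is consumed.
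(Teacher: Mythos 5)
Your proposal is correct and follows essentially the same route as the paper: (3.24) comes from applying Theorem 3.1 to the dilates $\lambda_{j}E$ and computing $|\lambda_{j}E|=|\lambda_{j}|^{n^{2}}|E|$, and (3.25) comes from convexity of $E$ together with the degree-$n$ homogeneity of the determinant (the paper sets $\lambda_{j}=\tfrac{1}{n}$ directly, whereas you take $\lambda_{j}=1$ and pull out the factor $n^{n}$ afterwards — the same argument up to trivial rescaling).
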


\begin{proof}
To see (3.25), let $E_{j}=\lambda_{j}E$.
Applying  Theorem 3.1 gives
$$\prod_{j=1}^{n}  |\lambda_{j} E|^{\frac{1}{n^{2}}}  
\leq \mathcal{C}_{n} \displaystyle{\sup_{\substack{A_{j} \in E  \\  j=1, \dots, n}}}  \  | \det(\lambda_{1} A_{1}+ \dots + \lambda_{n} A_{n} ) |,$$
which  implies (3.25).
In particular, if $E \subset \mathfrak{M}^{n \times n}$ is a compact convex set,
setting $\lambda_{j}=\frac{1}{n}$, $j=1, \dots, n$, it follows from (3.24) that
$$ (\frac{1}{n})^{n} |E|^{1/n}  \leq   \mathcal{A}_{n}    \displaystyle{\sup_{\substack{A_{j} \in E \\ j=1, \dots, n}}}   | \det(\frac{1}{n}A_{1}+ \dots +\frac{1}{n} A_{n} )  |.$$
 On the other hand, since $E$ is convex,
$$\displaystyle{\sup_{A\in E}}   | \det(A)  |
\geq \displaystyle{\sup_{\substack{A_{j} \in E \\ j=1, \dots, n} }}   | \det(\frac{1}{n}A_{1}+ \dots +\frac{1}{n} A_{n} )  |.$$
Thus we get (3.25).

\end{proof}

\medskip

Here we give a direct way to see  Lemma 13.2 \cite{Christ} which follows from (3.25).
Let $E \subset \mathfrak{M}^{n \times n}$ be a  measurable set.
The inequality (1.18) in  Lemma 13.2  has  translation invariance property,
so we  assume that $0 \in E$.
Given any  matrices $A_{1}, \dots, A_{n^{2}}$ in $E$,  from (3.25) it follows that
$$ |\mathrm{co} \{0,A_{1}, \dots, A_{n^{2}} \} |^{\frac{1}{n}}  \lesssim_{n}  \sup\limits_{A \in \mathrm{co} \{0,A_{1}, \dots, A_{n^{2}} \} }  \  | \det(A)  |,   \eqno (3.26) $$
By (2.2),
there   exist $A_{1}, \dots, A_{n^{2}}$ such that
$$|E|  \lesssim_{n} | \mathrm{co} \{0,A_{1}, \dots, A_{n^{2}} \} |,$$
together with (3.26) we obtain that
$$|E|^{\frac{1}{n}} \lesssim_{n}     \sup\limits_{A \in \mathrm{co} \{0,A_{1}, \dots, A_{n^{2}} \} }  \  | \det(A)  |.   \eqno (3.27)  $$
For any convex set $F \subset \mathfrak{M}^{n \times n}$
$$ \sup\limits_{A \in \mathrm{co} \{0, F \}}  \  | \det(A)|=\sup\limits_{A \in F}  \  | \det(A)|,$$
since $ |\det( \lambda A)|= \lambda^{n} |\det( A)| \leq | \det(A)|$  for any  $\lambda \in [0,1]$.
So
$$\sup\limits_{A \in \mathrm{co} \{0,A_{1}, \dots, A_{n^{2}} \} }  \  | \det(A)  |
= \sup\limits_{A \in \mathrm{co} \{A_{1}, \dots, A_{n^{2}} \} }  \  | \det(A)  |.   \eqno (3.28)$$
Denote $A^{(k)}$ by the $k$-th column vector of the matrix $A$, $1 \leq k \leq n$ . Then there exist
$\widetilde{A}_{1}, \dots, \widetilde{A}_{n} \in \{A_{1}, \dots, A_{n^{2}}\}$  \
($\widetilde{A}_{i}$,  $\widetilde{A}_{j}$ might be the same matrix),  such that
for any $ \{\lambda_{1}, \dots,\lambda_{n^{2}} \}$  satisfying
$\sum\limits_{j=1}^{n^{2}} \lambda_{j} =1$ and $0 \leq \lambda_{j} \leq 1$,
$$| \det( \lambda_{1}A_{1}+ \dots+ \lambda_{n^{2}}A_{n^{2}} )  |
\leq  | \sum\limits_{ \substack{i_{j} \in \{1, \dots, n \} \\  i_{j} \neq  i_{k}, \forall j \neq  k }}  \det ( \widetilde{A}_{i_{1}}^{(1)},  \dots,  \widetilde{A}_{i_{n}}^{(n)} ) |   \eqno (3.29)$$
holds, this is because
$$\sum\limits_{1 \leq l_{1}, \dots, l_{n} \leq n^{2}} \lambda_{l_{1}} \dots \lambda_{l_{n}}  
\leq  \sum\limits_{1 \leq l_{1}, \dots, l_{n-1} \leq n^{2}} \lambda_{l_{1}} \dots \lambda_{l_{n-1}}
\leq \dots \leq \sum\limits_{1 \leq l_{1},  l_{2} \leq n^{2}} \lambda_{l_{1}} \lambda_{l_{2}}
\leq \sum\limits_{1 \leq l_{1}  \leq n^{2}} \lambda_{l_{1}}= 1.$$
Hence from (3.27)-(3.29)
$$|E|^{\frac{1}{n}}  \lesssim_{n}   | \sum\limits_{ \substack{i_{j} \in \{1, \dots, n \} \\  i_{j} \neq  i_{k}, \forall j \neq  k }}  \det ( \widetilde{A}_{i_{1}}^{(1)},  \dots,  \widetilde{A}_{i_{n}}^{(n)} ) |.    \eqno (3.30)$$
As mentioned in the proof of Lemma 13.2 \cite{Christ},
$ \sum\limits_{ \substack{i_{j} \in \{1, \dots, n \} \\  i_{j} \neq  i_{k}, \forall j \neq  k }}  \det ( \widetilde{A}_{i_{1}}^{(1)},  \dots,  \widetilde{A}_{i_{n}}^{(n)} ) $  is
$\mathbb{Z}$-linear combination of
$\{\det ( \sum\limits_{j=1}^{n}  s_{j} \widetilde{A}_{j} ): s_{j} \in \{0,1\}  \}$.
This gives (1.20):
$$ |E|^{\frac{1}{n}}
\lesssim_{n}  \sup\limits_{\substack{A_{1}, \dots, A_{n} \in E  \\  s_{1}, \dots, s_{n} \in \{0, 1\} }} |\det( s_{1}A_{1}+ \dots + s_{n}A_{n})|.$$


Obviously, (3.25) is not affine invariant.
The following example shows balls or ellipsoids are not the optimisers.

\hspace{-12pt}{\bf Example 3.2.}
(i)\ Let $n=2$, $E=B(0,r)$,
$A=
\left(
 \begin{array}{cc}
   a & c  \\
   b & d  \\
 \end{array}
\right) \in E$. \\
Then $\sup\limits_{A\in E} |\det(A)|=\frac{r^{2}}{2}$ by calculation.
Consider the ellipsoid $F$ in $\mathbb{R}^{4}$ with $|F|=|B(0,r)|$,
$$F= \{
\left(
 \begin{array}{cc}
   a & c  \\
   b & d  \\
 \end{array}
\right): \frac{a^{2}}{l_{1}^{2}}+ \frac{b^{2}}{l_{2}^{2}}+\frac{c^{2}}{l_{3}^{2}}+\frac{d^{2}}{l_{4}^{2}} \leq 1 \}.$$
It is easy to obtain
$\sup\limits_{A\in F} |\det(A)| \geq \frac{l_{1}l_{4}+l_{2}l_{3}}{4} \geq \frac{r^{2}}{2}$ by GM-AM inequality.

(ii)\ Let $r=1$. Since $A\mapsto |\det(A)|$ is a continuous function on $E=B(0,1)$ under the natural topology on Euclidean space $\mathbb{R}^{4}$,
there exists $0<\delta<\frac{1}{25}$ such that $|\det(A)|\leq \frac{1}{4}$ for all $A\in E$ satisfying
\begin{align*}
|A-
\left(
 \begin{array}{cc}
   1 & 0  \\
   0 & 0  \\
 \end{array}
\right)|
= (a-1)^{2}+b^{2}+c^{2}+d^{2}
\leq 2\delta.
\end{align*}
Then  for all  $A\in E$ satisfying $\sqrt{1-\delta} \leq a \leq 1$, we have
$$b^{2}+c^{2}+d^{2} \leq 1-a^{2} \leq 1- (1-\delta) = \delta.$$
Thus 
\begin{align*}
|A-
\left(
 \begin{array}{cc}
   1 & 0  \\
   0 & 0  \\
 \end{array}
\right)|
= (a-1)^{2}+b^{2}+c^{2}+d^{2}
\leq (1- \sqrt{1-\delta})^{2} + \delta
\leq 2\delta
\end{align*}
which implies that
$|\det(A)|\leq \frac{1}{4}$ for any $A\in E$ satisfying $\sqrt{1-\delta} \leq a \leq 1$.

Let
$P=
\left(
 \begin{array}{cc}
   0 & 0  \\
   0 & p  \\
 \end{array}
\right)$
with $p=\frac{1}{\sqrt{1-\delta}}$ and then consider $\sup\limits_{A\in \mathrm{co} \{P\cup E\}} |\det(A)|$,
\begin{align*}
\ \ \ \ \ \sup\limits_{A\in \mathrm{co} \{P\cup E\}} |\det(A)|
&=\sup\limits_{A\in E, \lambda \in [0,1]} |\det(\lambda A+(1-\lambda)P) |\\
&=  \sup\limits_{A\in E, \lambda \in [0,1]}
|\det
\left(
 \begin{array}{cc}
  \lambda a &  \lambda c \\
  \lambda b &  \lambda d+ (1-\lambda)p \\
 \end{array}
\right)
|\\
&= \sup\limits_{A\in E, \lambda \in [0,1]}
|\det
\left(
 \begin{array}{cc}
  \lambda a &  \lambda c \\
  \lambda b &  \lambda d \\
 \end{array}
\right)+
\det
\left(
 \begin{array}{cc}
  \lambda a &  0 \\
  \lambda b & (1-\lambda)p \\
 \end{array}
\right)
|.
\end{align*}

When $a\not\in [\sqrt{1-\delta}, 1]$,
\begin{align*}
&\  \sup\limits_{A\in E, \lambda \in [0,1]}
|\det
\left(
 \begin{array}{cc}
  \lambda a &  \lambda c \\
  \lambda b &  \lambda d \\
 \end{array}
\right)+
\det
\left(
 \begin{array}{cc}
  \lambda a &  0 \\
  \lambda b & (1-\lambda)p \\
 \end{array}
\right)
| \\
&\leq \sup\limits_{\lambda \in [0,1]}
\lambda^{2} \frac{1}{2} +  \lambda(1-\lambda) a p \\
&\leq \sup\limits_{\lambda \in [0,1]}
\lambda^{2} \frac{1}{2} +  \lambda(1-\lambda) \sqrt{1-\delta} \frac{1}{\sqrt{1-\delta}} \\
&= \sup\limits_{\lambda \in [0,1]}
\lambda^{2} \frac{1}{2} +  \lambda(1-\lambda) \leq  \frac{1}{2}.
\end{align*}

When $a \in [\sqrt{1-\delta}, 1]$,
\begin{align*}
&\  \sup\limits_{A\in E, \lambda \in [0,1]}
|\det
\left(
 \begin{array}{cc}
  \lambda a &  \lambda c \\
  \lambda b &  \lambda d \\
 \end{array}
\right)+
\det
\left(
 \begin{array}{cc}
  \lambda a &  0 \\
  \lambda b & (1-\lambda)p \\
 \end{array}
\right)
| \\
&\leq \sup\limits_{\lambda \in [0,1]}
\lambda^{2} \frac{1}{4} +  \lambda(1-\lambda)  p \\
&=\sup\limits_{\lambda \in [0,1]}
\lambda^{2} \frac{1}{4} +  \lambda(1-\lambda)   \frac{1}{\sqrt{1-\delta}}.
\end{align*}
It is easy to see for $0<\delta<\frac{1}{25}$ given above,
$$\sup\limits_{\lambda \in [0,1]}
\lambda^{2} \frac{1}{4} +  \lambda(1-\lambda)   \frac{1}{\sqrt{1-\delta}} \leq  \frac{1}{2}.$$
Therefore, 
$$\sup\limits_{A\in \mathrm{co} \{P\cup E\}} |\det(A)| = \sup\limits_{A\in E} |\det(A)|,$$
which implies balls can not be the optimisers.

\medskip

\hspace{-12pt}{\bf Remark 3.3.}
Let $E \subset \mathfrak{M}^{n \times n}$ be a compact convex set.              
 If we compare the maximal volume of simiplicies
$\sup\limits_{A_{0}, \dots, A_{n^{2}} \in E} \mathrm{vol}( \mathrm{co} \{ A_{0}, \dots, A_{n^{2}} \})$
contained in $E$ with the $\displaystyle{\sup_{A \in E}}  \  | \det(A)  |$,
it follows from (3.25) that 
$$\sup\limits_{A_{0}, \dots, A_{n^{2}} \in E} \mathrm{vol}( \mathrm{co} \{A_{0}, \dots, A_{n^{2}} \})
\lesssim_{n}  \displaystyle{\sup_{A \in E}}  \  | \det(A)  |^{n}.  \eqno (3.31)$$
Indeed by John ellipsoids, it is enough to consider the case when $E$ is a ellipsoid in  $\mathfrak{M}^{n \times n}$.
For any ellpsoid 
$$E\equiv \{x\in \mathbb{R}^{n^{2}}:  \displaystyle{\sum_{i}^{n^{2}}} \frac{ | \langle x-x_{0}, \omega_{i} \rangle |^{2}}{l_{i}^{2}} \leq 1  \},$$
where $x_{0}\in \mathbb{R}^{n^{2}}$, $\{\omega_{i}\}$ is an orthonormal basis in $\mathbb{R}^{n^{2}}$.
By the affine invariance of $\sup\limits_{A_{0}, \dots, A_{n^{2}} \in E} \mathrm{vol}( \mathrm{co} \{ A_{0}, \dots, A_{n^{2}} \})$,
it is enough to see balls centred at $0$. 
Apply the Hadamard inequality, for any $A_{j}  \in B(0,r) \subset \mathbb{R}^{n^{2}}$, $j=0, \dots, n^{2}$
$$\mathrm{vol}( \mathrm{co} \{ A_{0}, \dots, A_{n^{2}} \}) 
\leq  |A_0-A_1 |  |A_0-A_2| \dots  |A_0-A_{n^{2}}| \lesssim_{n} r^{n^{2}} \sim |B(0,r)| .$$
Hence for any ellipsoid $E   \subset \mathbb{R}^{n^{2}}$,
$$\sup\limits_{A_{0}, \dots, A_{n^{2}} \in E} \mathrm{vol}( \mathrm{co} \{A_{0}, \dots, A_{n^{2}} \})
\lesssim_{n}  |E|.$$
On the other hand,  by (3.25)
$$|E| \lesssim_{n} \displaystyle{\sup_{A \in E}}  \  | \det(A)  |^{n}.$$
Therefore, we have  the following relation
$$\sup\limits_{A_{0}, \dots, A_{n^{2}} \in E} \mathrm{vol}( \mathrm{co} \{ A_{0}, \dots, A_{n^{2}} \})
\lesssim_{n}  \displaystyle{\sup_{A \in E}}  \  | \det(A)  |^{n}.$$
Similarly,  we have
$$\sup\limits_{A_{1}, \dots, A_{n^{2}} \in E} \mathrm{vol}( \mathrm{co} \{ 0, A_{1}, \dots, A_{n^{2}} \})
\lesssim_{n}  \displaystyle{\sup_{A \in E}}  \  | \det(A)  |^{n}.  \eqno (3.32)$$
If $0 \in E$, it is true which mainly due to the Hadamard inequality and the 
$\mathrm{GL}_{n}(\mathbb{R})$  invariance of  
$\sup\limits_{A_{1}, \dots, A_{n^{2}} \in E} \mathrm{vol}( \mathrm{co} \{ 0, A_{1}, \dots, A_{n^{2}} \})$. 
If $0 \not \in E$, the relation above still holds because of the fact
$$\sup\limits_{A \in E}  | \det(A)  |^{n}= \sup\limits_{A \in \mathrm{co}\{0, E\}}  | \det(A)  |^{n}.$$

\bigskip

\hspace{-13pt}{\bf Acknowledgments.}\quad

I am  grateful to my supervisor Professor Carbery for his helpful suggestions and revision on this paper.
This work was supported by the scholarship from China Scholarship Council.

\bigskip

\bibliographystyle{amsalpha}

\begin{thebibliography}{AcBeRu}

\bibitem{Bianchi} G. Bianchi and  P. Gronchi, Steiner symmetrals and their distance from a ball, Israel J. Math. 135 (2003). \\

\bibitem{Bonnesen} T. Bonnesen and W. Fenchel, Theorey of convex bodies, BCS Associates, Moscow, Idaho, (1987). \\

\bibitem{BLL} H. Brascamp, H. Lieb and J. Luttinger, A general rearrangement inequality for multiple integrals, 
J. Funct. Anal. 17, 227-237  (1974). \\

\bibitem{Chen} T. Chen, On a geometric inequality related to fractional integration, preprint. \\

\bibitem{Christ} M. Christ,  A sharpened Hausdorff-Young inequality, arXiv:1406.1210 [math.CA] \\

\bibitem{Eggleston} H. Eggleston, Convexity, Cambridge University Press, New York, (1958). \\

\bibitem{Garden} R. Gardner, The Brunn-Minkowski inequality, Bull. Amer. Math. Soc. 39, 355-405 (2002). \\

\bibitem{Gressman} P.T. Gressman, On multilinear determinant functionals, Proc. Amer. Math. Soc. 139, No.7, 2473-2484 (2011) .\\

\bibitem{Gruber} P. Gruber, Convex and discrete geometry, Springer-Verlag, New York, (2007). \\

\bibitem{Kanazawa} A. Kanazawa, On the minimal volume of simplices enclosing a convex body, Arch. Math. 102, No.5,  489-492 (2014).\\

\bibitem{Klain} D. Klain,  Steiner symmetrization using a finite set of directions,   Adv. Math. 48,  340-353 (2012). \\

\bibitem{LL} H. Lieb and M. Loss, Analysis, American Mathematical Society, 2001. \\

\bibitem{Macbeath} A.M. Macbeath,  An extremal property of the hypersphere, Proc. Cambridge. Philos. Soc. 47, 245-247 (1951).\\

\bibitem{Riesz} F. Riesz, Sur une in\'{e}galit\'{e}  int\'{e}grale, J. London Math. Soc. 5,  162-168 (1930). \\

\bibitem{Webster} R. Webster, Convexity, Oxford University Press, New York, (1994). \\












\end{thebibliography}

\end{document}